\newtheorem{thm}{Theorem}[section]
\newtheorem{f}[thm]{Fact} 
\newtheorem{cor}[thm]{Corollary}
\newtheorem{lem}[thm]{Lemma}
\newtheorem{prop}[thm]{Proposition}
\theoremstyle{definition}
\newtheorem{defin}[thm]{Definition}
\theoremstyle{remark}
\newtheorem{remark}[thm]{Remark}
\newtheorem{ex}[thm]{Example}
\newtheorem{question}[thm]{Question}
\numberwithin{equation}{section}
\newcommand{\delete}[1]{} 
\newcommand{\sk}{\vskip 0.3cm}
\newcommand{\nl}{\newline}
\newcommand{\ben}{\begin{enumerate}}
\newcommand{\een}{\end{enumerate}}
\newcommand{\bit}{\begin{itemize}}
\newcommand{\eit}{\end{itemize}}
\def\R {{\mathbb R}}
\def\N {{\mathbb N}}
\def\Z {{\mathbb Z}}
\def\Aut{{\mathrm Aut}\,}
\def\Homeo{{\mathrm{Homeo}}\,}
\def\B{{\mathcal{B}}}
\def\Ucal{\mathcal{U}}
\def\Iso{\operatorname{Iso}}
\def\VC{\operatorname{VC}}
\def\a{\alpha}
\def\t{\tau}
\def\s{\sigma}
\newcommand{\rank}{\operatorname{rank}} 
\newcommand{\ind}{\operatorname{ind}} 
\newcommand{\med}{\operatorname{med}}
\begin{document} 

\title[]
{Tameness of actions on finite-rank median algebras} 

\author[]{Michael Megrelishvili} 
\address{Department of Mathematics,
Bar-Ilan University, 52900 Ramat-Gan, Israel}
\email{megereli@math.biu.ac.il}
\urladdr{http://www.math.biu.ac.il/$^\sim$megereli}

\thanks{Supported by the Gelbart Research Institute at the Department of Mathematics, Bar-Ilan  University}  

\subjclass[2020]{37B05, 52A01, 54H15, 20F65, 11K31}   
\keywords{Helly selection, median algebra, finite rank, median-preserving maps, tame dynamical systems, M\"obius disjointness, Sarnak conjecture}

\date{July, 2026}   

 \begin{abstract}  
We show that for every finite-rank median algebra $X$, the rank of $X$ coincides with the independence number of the family of all median-preserving maps $X \to [0,1]$. In the compact topological case, the same equality holds for the family of all continuous median-preserving maps. Combined with Rosenthal's dichotomy, this yields a generalized Helly selection principle: for every finite-rank median algebra, every uniformly bounded sequence of median-preserving real-valued maps admits a pointwise convergent subsequence whose limit is again median-preserving. 

As a dynamical application, we generalize a joint result with E. Glasner on dendrites and prove that every continuous action of a topological group by median automorphisms on a compact finite-rank median algebra is Rosenthal representable, and hence dynamically tame. We also apply this result to the Roller--Fioravanti compactification of finite-rank topological median $G$-algebras with compact intervals, and in particular to complete finite-rank median metric spaces under continuous isometric actions. In the metrizable cascade case this gives a new source of systems satisfying Sarnak's M\"obius disjointness conjecture, including natural compactifications arising from finite-dimensional cubical geometry.   
 \end{abstract}

\maketitle

\setcounter{tocdepth}{1}
\tableofcontents

 
  \section{Introduction} \label{s:intro}
 
 Median algebras serve as a unified framework for diverse structures, from distributive lattices and median graphs to CAT(0) cube complexes and dendrites. Our aim is to establish a new link between these geometric objects and the theory of tame dynamical systems. This approach provides new cascades of a  geometric nature satisfying Sarnak's \textit{M\"obius disjointness conjecture}. 
%
%

Median algebras form a flexible framework that appears naturally in several areas, including
geometry, group theory, graph theory, combinatorics, topology, and theoretical computer science.
See, for example,  \cite{Vel-book,Roller,KM-survey,CDH,Sageev,Bow13,BowditchMedian,vanMill,GM-D,Fio19,Fioravanti20}.  
  
 Tame dynamical systems first appeared in a paper of K\"ohler \cite{Ko} (under the name \textit{regular systems}). This concept has been extensively studied and developed (see, e.g.,  \cite{Glasner06,KL,Glasner-env07,GM-rose,GM-survey14,GM-D,Glas-t,GM-tLN, 
 	GM-TC,FKY,Codenotti}). This theory serves as a bridge between low complexity topological dynamics and the low complexity Banach spaces; namely \textit{Rosenthal Banach spaces} (not containing $\ell_1$).  
 By a dynamical analogue of the  Bourgain–Fremlin–Talagrand dichotomy, a compact metrizable dynamical system is tame if and only if its enveloping semigroup is a Rosenthal compact space. 
Many remarkable naturally defined dynamical $G$-systems coming from geometry, analysis and symbolic dynamics
are tame. 

For the purposes of the present paper, Rosenthal representability should be viewed as a functional-analytic form of low dynamical complexity. It says that the system can be realized on weak-star compact subsets of duals of Banach spaces which do not contain an isomorphic copy of $\ell_1$. In the metrizable case this is equivalent to dynamical tameness. 

There is also a number-theoretic motivation. 
By a theorem of Huang--Wang--Ye \cite{HWY}, every tame compact metrizable
invertible cascade satisfies Sarnak's M\"obius disjointness conjecture. 
Consequently, every new geometric source of tame compact metrizable invertible
cascades gives a new class of systems for which the M\"obius function is
orthogonal to all continuous observations along orbits.
One of the aims of the present paper is to show that compact metrizable finite-rank median algebras, and natural metrizable compactifications attached to finite-dimensional median geometry, provide such a source. See Section \ref{s:Sarnak} for details. 
  
 One of the equivalent definitions of tame compact $G$-systems $X$ can be interpreted as the ``no independence property" (NIP) in dynamics \cite{KL,GM-tLN}. Namely, the absence of Rosenthal independent infinite sequences in the orbit $fG$ of every $f \in C(X)$. Note that a related concept of \emph{model-theoretic NIP} was introduced by Shelah \cite{Shelah04} and plays a major role in model theory. 
  
 In our earlier work \cite{GM-rose} (joint with Eli  Glasner), we established the \emph{WRN criterion} (Rosenthal Representability), which provides a functional-analytic characterization: a compact (not necessarily metrizable) $G$-system is Rosenthal representable if and only if it admits a $G$-invariant point-separating bounded family of continuous real functions with no independent infinite subsequences. 
 In \cite{GM-D}, we successfully applied this machinery to rank-$1$ structures, \textit{median pretrees}; in particular, to dendrons (note that metrizable dendrons constitute exactly the class of all dendrites). The key observation was that the canonically associated betweenness relation and  tree structure on dendrons prevents the formation of independent pairs of monotone maps.
 
 In the present paper, we extend this program to all finite ranks. We consider \emph{topological median algebras}, which form the natural generalization of many important geometric and metric structures. We establish 
 that the rank of the algebra acts as a strict bound on dynamical complexity:    
 \emph{every continuous action of a topological group $G$ on a compact finite-rank median algebra 
 $X$ by median automorphisms is Rosenthal representable (in particular, dynamically tame).} 	
 	
 This shows that the tameness observed in trees is not merely a rank-one phenomenon, but rather a
 consequence of the rigid combinatorial structure of median convexity.	
 More precisely, in rank $1$ the obstruction is ``no independent pair of monotone maps,” while in rank $n$ 
 we get no independent family of size $n+1$. 
 We define 
 \textbf{independence number} $\ind(F)$ of a family $F$ of real  functions on a set $X$, 
 which measures the maximal size of a finite  independent sequence in a function family. 
 
 Here we list some of the results of the present paper:   
 \begin{enumerate}  
 	\item (Theorem \ref{t:CrossedWallsAreIndependent}) 
 	$\ind(\mathcal{M}) = \rank(X)$ for every finite rank median algebra $X$ and the family $\mathcal{M}$ of all median-preserving maps $X \to [0,1]$. 
 	Moreover, by Lemma~\ref{l:IndEqualsDualVCSubsec}, the quantity  $\ind(\mathcal{M}) = \rank(X)$ coincides with the dual VC-dimension $\VC(\mathcal H(X)^{*})$ of the halfspace system $\mathcal H(X)$.
 
 	\item 
 	If $X$ is a finite rank \textbf{compact} median algebra, then $\ind(\mathcal{MC}) = \rank(X)$, where $\mathcal{MC}$ is the family of all continuous median-preserving maps $X \to [0,1]$ (Theorem \ref{t:RankCharacterization}).
 	  
 	\item (Theorem \ref{t:GenHelly}) Generalized Helly Selection Principle (sequential compactness of $\mathcal{M}$) for finite rank median spaces. 
  	
 	\item (Theorem \ref{t:FiniteRankTame}) Every continuous action of a topological group $G$ by median automorphisms on a finite rank compact median algebra is Rosenthal representable (in particular, dynamically tame). 
 	This can be directly applied to the Roller compactification of any finite rank median algebra (Corollary \ref{t:G-Rol}).

 \item  
 Let $X$ be a topological median algebra with compact intervals. Via a well-known canonical construction, the family of all interval retractions $X \to [u,v]$ yields an important Roller-type, median-preserving compactification $\nu \colon X\to X^{RF}$ which is continuous and
 injective (see \cite{Fioravanti20,BowditchMedian}). We call it the \textit{Roller--Fioravanti compactification}. If $X$ is a topological median $G$-space with compact intervals, then by Proposition \ref{t:G-bound} the induced $G$-action on $X^{RF}$ is jointly continuous. If $X$ has finite rank, then by Corollary \ref{t:RF-tame} the RF-compactification $X^{RF}$ is a Rosenthal representable $G$-system and dynamically 
 $G$-tame. In particular this holds by Corollary \ref{t:RF} for complete finite-rank median metric
 spaces (e.g., finite-dimensional CAT(0) cube complexes) and continuous isometric $G$-actions. 

 \item In the metrizable cascade case the preceding results imply M\"obius disjointness. In particular, every homeomorphic median automorphism of a compact metrizable finite-rank median algebra satisfies Sarnak's conjecture (Corollary \ref{c:SMDC}). Combining this with the intrinsic median compactification from \cite{Me-MedIntr} yields concrete classes of examples coming from Roller compactifications, MMC compactifications and finite-dimensional CAT(0) cube complexes.
 \end{enumerate}

 
\section{Preliminaries: median algebras and tame dynamical systems} 

\subsection*{Median algebras and rank}

A \textit{median algebra} is a set $X$ with a ternary operation 
$m \colon X^3 \to X$ satisfying the standard median axioms.  
Frequently we write $abc$ instead of $m(a,b,c)$. 
Recall one possible system of axioms 
(see \cite{Sholander54,Vel-book,BowditchMedian}) defining median algebras:  

\begin{itemize}
	\item [(M1)] $\s(a)\s(b)\s(c)=abc$ for every permutation $\s \in S_3$.   
	\item [(M2)] $abb=b$. 
	\item [(M3)] $(abd)cd=(acd)bd$.   
\end{itemize}

A map $f \colon X_1 \to X_2$ between median algebras is said to be a \textit{homomorphism} or \textbf{median preserving} (MP) if
$f(xyz)=f(x)f(y)f(z)$. Equivalently: for every convex subset $C \subseteq X_2$ the preimage $f^{-1}(C)$ is convex in $X_1$. 

 For every pair $x,y \in X$ we have the \textit{interval} $[x,y]_m:=\{z \in X: xyz=z\}$. 
Usually we omit the subscript and write simply $[x,y]$, where the context is clear. Always, $[x,x]=\{x\}$, $[x,y]=[y,x]$. For every triple $x,y,z$  in $(X,m)$ we have 
$$
[x,y] \cap [y,z] \cap [x,z] = \{xyz\}.
$$ 
A subset $C \subseteq X$ is \emph{convex} if $[x,y] \subseteq C$ for all $x,y \in C$. 
Every convex subset is a subalgebra. 
The intersection of convex subsets is convex. The convex hull $co(S)$  of a subset $S \subseteq X$ is the intersection of all convex subsets of $X$ containing $S$. 

Several remarkable structures are median algebras under their natural medians. For instance distributive lattices
(e.g.\ linear orders, Boolean algebras, and power sets   $\mathcal P(S)$). 

The following is one of the key definitions in median algebras. 

\begin{defin} \label{d:MedianRank} (see e.g., \cite{Vel-book,BowditchMedian,Fioravanti20})
	The \emph{rank} of a median algebra $X$ is the supremum of the numbers $n \in \N$ such that the Boolean hypercube $\{0,1\}^n$ embeds as a median subalgebra into $X$. Notation: $\rank(X)$. 
\end{defin}

The class of finite rank algebras is closed under  taking subalgebras and finite products. The rank of the product $X_1 \times X_2$ of two median algebras is $\rank(X_1)+\rank(X_2)$. 
Surjective homomorphisms do not increase the rank. 

Rank-one algebras are 
\textit{median pretrees} (in terms of B.H. Bowditch). It is a useful treelike structure which naturally generalizes linear orders and the betweenness relation on dendrons (e.g., dendrites), simplicial and $\R$-trees. 
Important examples of algebras with rank $k \in \N$ are 
Boolean hypercubes $\{0,1\}^{k}$, Euclidean cubes $[0,1]^{n}$ and 
CAT(0) cube complexes with dimension $k$. 

Two subsets $A_1,A_2$ in a median algebra $X$ are \textit{crossing} if the following four intersections $A_1 \cap A_2, A_1 \cap A_2^c, A_1^c \cap A_2, A_1^c \cap A_2^c$ are nonempty. 

A \emph{wall} is a pair $W=\{W^0, W^1\}$ of disjoint convex sets whose union is $X$. The sets $W^0$ and $W^1$ are called \emph{halfspaces}.
Two walls $W_1, W_2$ are \emph{crossing} if all four intersections of their halfspaces are non-empty.  
Every halfspace $H \subset X$ determines a median-preserving map
$
\chi_H:X\to\{0,1\},
$
and conversely every median-preserving map $f:X\to\{0,1\}$ is the characteristic function of a
halfspace. 

\begin{f}  \label{f:facts1} Some standard properties of median algebras. 
	\begin{enumerate} 
		\item \cite[Ch.1, 6.11]{Vel-book} A map $f \colon X_1 \to X_2$ between median algebras is median preserving (MP) if and only if it is convexity preserving (CP) in the sense of \cite[Ch.1, 1.11]{Vel-book}, meaning that for every convex subset $C \subseteq X_2$ the preimage $f^{-1}(C)$ is convex in $X$. 
		\item \cite[Theorem 2.8]{Roller} (Kakutani separation property) Any two disjoint convex sets in any median algebra are separated by a wall.
		\item \cite[Lemma 8.1.3]{BowditchMedian} Let $Q$ be a subalgebra of $X$. Then each wall of $Q$ comes from a wall of $X$. That is, any wall of $Q$ 
		is of the form $\{W^0 \cap Q, W^1 \cap Q\}$ for some wall $\{W^0, W^1\}$  of $X$.   
		\item \cite[Lemma 7.1.1]{BowditchMedian} (Helly Property) 
		Let $C_1, C_2, \cdots, C_n$ be a finite sequence of pairwise intersecting convex subsets in a median algebra. Then $\cap_{i=1}^n C_i$  is nonempty. 
		
		\item \label{t:RankCrossing} 
		\cite[Lemma 8.2.1]{BowditchMedian}, \cite[Lemma 2.5]{Fioravanti20}  
		The rank of a median algebra $X$ is equal to the maximal size of a family of pairwise crossing walls. 
	\end{enumerate}	
\end{f}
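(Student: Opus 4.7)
My plan is to treat Fact~\ref{f:facts1} as a collection of five classical results whose proofs I would sketch in turn. Items (1)--(4) admit short proofs: for (1), MP implies CP because if $x,y \in f^{-1}(C)$ and $z \in X$ then $f(m(x,y,z)) = m(f(x),f(y),f(z)) \in [f(x),f(y)] \subseteq C$, while CP implies MP via the identity $[x,y]\cap[y,z]\cap[x,z]=\{m(x,y,z)\}$ after pulling back the three defining intervals; (2) is a Zorn maximality argument producing a convex set maximal with respect to disjointness from the other, together with an M3-check that the complement is convex; (3) reduces to (2) applied to the two halves of a wall of $Q$ viewed inside $X$; (4) is a routine induction with base case $n=3$, which is essentially the defining intersection of three intervals.

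The substantive item is (5). For $\rank(X) \geq$ (max number of pairwise crossing walls), suppose $W_1,\dots,W_n$ are pairwise crossing and for each $\sigma \in \{0,1\}^n$ set $C_\sigma := \bigcap_{i=1}^n W_i^{\sigma_i}$. Each halfspace is convex, hence so is $C_\sigma$; every pair of halfspaces from distinct crossing walls intersects, so by Helly (item (4)) every $C_\sigma$ is nonempty. The coordinate map $q\colon X \to \{0,1\}^n$, $q(x)_i := \chi_{W_i^1}(x)$, is an MP surjection, and since onto MP maps cannot raise the rank and $\rank(\{0,1\}^n)=n$, this yields $\rank(X)\geq n$. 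The reverse inequality is easier: an embedding $\phi\colon \{0,1\}^n \hookrightarrow X$ supplies $n$ coordinate walls of $\phi(\{0,1\}^n)$ which by (3) extend to $n$ walls of $X$, pairwise crossing because all four value patterns appear already in the hypercube.

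The main obstacle I expect is upgrading the surjection $q\colon X \twoheadrightarrow \{0,1\}^n$ to an actual embedded copy of $\{0,1\}^n$ in $X$, if one wishes to avoid citing rank-monotonicity as a black box. An arbitrary set-theoretic section of $q$ need not be a homomorphism, so one is forced to construct the embedding inductively: obtain embeddings $\phi_\epsilon\colon \{0,1\}^{n-1} \hookrightarrow W_1^\epsilon$ from the pairwise-crossing restrictions of $W_2,\dots,W_n$ to the halfspace $W_1^\epsilon$ (using Helly again to verify that these restrictions remain pairwise crossing), and then glue across $W_1$. The gluing is delicate because the two sides must be coordinated; in the combinatorial setting this rests on the gate-projection machinery of finite-rank median algebras (the fact that every halfspace admits an MP nearest-point projection), which is exactly what underwrites Bowditch's proof of item (5) as cited.
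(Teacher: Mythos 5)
The paper offers no proof of Fact~\ref{f:facts1} at all: each item is stated with a citation (van de Vel, Roller, Bowditch, Fioravanti) and used as a black box. Your proposal therefore does strictly more than the paper, and your sketches for (1), (2) and (4) are correct and standard; your treatment of the easy direction of (5) (hypercube $\Rightarrow$ crossing walls via item (3)) and of the cell-nonemptiness argument (Helly applied to $\bigcap_i W_i^{\sigma_i}$) is exactly the reasoning the paper itself deploys later, in the proofs of Theorems~\ref{t:CrossedWallsAreIndependent} and~\ref{t:RankCharacterization}.

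Two steps are thinner than your phrasing suggests. First, item (3) does not simply ``reduce to (2)'': the halves $A,B$ of a wall of $Q$ are convex in $Q$ but generally not in $X$, so before invoking Kakutani separation in $X$ you must show that $\operatorname{co}_X(A)$ and $\operatorname{co}_X(B)$ are disjoint. That disjointness is the actual content of Bowditch's Lemma~8.1.3 (proved via the join description $\operatorname{co}(A\cup\{u\})=\bigcup_{a\in A}[a,u]$ and an induction), not a corollary of (2). Second, for the hard direction of (5) you correctly diagnose the issue: deducing $\rank(X)\ge n$ from the MP surjection $q\colon X\twoheadrightarrow\{0,1\}^n$ via ``onto homomorphisms cannot increase the rank'' is circular in spirit, since proving that monotonicity statement requires lifting a hypercube through $q$, which is precisely the construction being avoided. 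Your fallback (inductive gluing across $W_1$ using gate projections onto halfspaces) is the right idea and is what the cited proofs of Bowditch and Fioravanti carry out, but as written it remains a pointer to the literature rather than an argument. Since the paper itself treats all five items as imported facts, this does not undermine anything downstream; it just means your item (5) is a proof outline, not a proof.
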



\subsection*{Topological median algebras} 

A \emph{topological median algebra} (tma) 
is a Hausdorff topological space $(X,\tau)$ equipped with a continuous median $m \colon X^3 \to X$ operation. 
If, in addition, $(X,\tau)$ is a compact space then we simply say: compact median space. 
We warn that in some publications (see, for example, \cite{vanMill,KKT}) an extra condition is assumed (namely, compact spaces with a binary convexity satisfying a separation axiom $CC_2$). 
 
Subalgebras and products (with the coordinate-wise median) of topological median algebras are themselves topological median algebras. The projection onto each coordinate is MP. 
Remarkable examples of tma are CAT(0) cube complexes  and usual cubes $[0,1]^{\kappa}$ (for every cardinal ${\kappa}$). 

Many important examples come from \textit{median metric spaces}, which play a major role in metric geometry and group theory. 
For basic information, see 
\cite{BowditchMedian, Fioravanti20,Vel-book}).   
 
\begin{f} \label{f:facts2} Some properties of topological median algebras. 
	\begin{enumerate} 		
		\item $\phi \colon X \to [x,y], \ \phi(z)=xyz$ is a continuous MP retraction for every tma $X$ and $x,y \in X$. So, if $X$ is compact then every interval $[x,y]$ is compact in $X$.  
		\item \cite[Lemma 2.7]{Fioravanti20} Let $K$ be a compact median algebra. If $C_1, \cdots, C_n$ are convex and compact in $K$ then the convex hull $co(C_1 \cup \cdots \cup C_n)$ is compact. In particular, 
		$co(F)$ is compact for every finite subset $F$ in $K$.   
		\item (\cite[12.2.4 and 12.2.5]{BowditchMedian}) Every compact finite rank median algebra is locally convex. 
		\item  
		Every compact locally convex median algebra $K$ admits a topological median embedding into a cube $[0,1]^\kappa$.  
	
		
		Sketch: this follows from Chapter III in  \cite{Vel-book}; in particular from  \cite[4.13.3 and  4.16]{Vel-book}.  
		\item \cite[Lemma 12.3.4]{BowditchMedian} Let $X$ be a topological median algebra and $Y$ is its dense subalgebra. Then $\rank(Y)=\rank(X)$.
	\end{enumerate}	
\end{f}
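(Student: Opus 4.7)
The plan is to handle the five items of Fact~\ref{f:facts2} in turn. Most reduce to direct applications of the median axioms combined with standard compactness arguments; only the cube embedding in item~(4) requires nontrivial work, and I expect that to be the main obstacle.

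For item~(1), continuity of $\phi(z)=m(x,y,z)$ is immediate from continuity of $m$. That $\phi$ is median-preserving amounts to the identity $m(x,y,m(a,b,c))=m(m(x,y,a),m(x,y,b),m(x,y,c))$, which follows from (M1) and (M3) by a short computation and expresses that the partial map $m(x,y,\cdot)$ is itself a homomorphism. To see $\phi$ retracts onto $[x,y]$: by definition of the interval, $m(x,y,z)=z$ whenever $z\in[x,y]$, while $m(x,y,m(x,y,z))=m(x,y,z)$ (another axiom exercise) shows $\phi(X)\subseteq[x,y]$. Hence $[x,y]=\phi(X)$ is compact whenever $X$ is.

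For item~(2), I would argue by induction on $n$. For $n=2$ one uses the standard identity $co(C_1\cup C_2)=\bigcup_{a\in C_1,\,b\in C_2}[a,b]$ valid in any median algebra; by item~(1) the right-hand side is the continuous image of the compact set $C_1\times C_2\times K$ under $(a,b,k)\mapsto m(a,b,k)$, and is therefore compact. The inductive step then combines $co(C_1\cup\cdots\cup C_{n-1})$, compact by hypothesis, with $C_n$. The case of a finite set $F$ is recovered by applying this to the singleton convex sets $C_i=\{f_i\}$.

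Items~(3) and~(4) are the delicate ones. For local convexity in finite rank, the core idea (Bowditch, 12.2.4--12.2.5) is that near any point only boundedly many pairwise crossing walls can be active, so finite intersections of the corresponding halfspaces yield a convex neighborhood basis. Given local convexity, the embedding into $[0,1]^\kappa$ proceeds by showing that continuous median-preserving functions $K\to[0,1]$ separate points: one combines Kakutani separation (Fact~\ref{f:facts1}(2)) for disjoint closed convex sets with a Urysohn-style interpolation along intervals, promoting characteristic halfspace maps into continuous real-valued MP maps. The product of such a point-separating family then realizes $K$ as a closed median subalgebra of $[0,1]^\kappa$, where $\kappa=w(K)$. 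The converse is immediate since $[0,1]^\kappa$ carries the coordinatewise median and has a neighborhood basis of box-shaped convex sets.

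For item~(5), Fact~\ref{f:facts1}(3) says that any wall of $Y$ already comes by restriction from a wall of $X$, and the four non-empty intersections inside $Y$ persist in $X$; thus $\rank(Y)\le\rank(X)$ by Fact~\ref{f:facts1}(5). Conversely, given $n$ pairwise crossing walls of $X$, their $2^n$ intersection regions contain interior points (using local convexity, or by a direct argument that crossings persist in dense subsets), and density of $Y$ forces each region to meet $Y$; restricting yields a pairwise crossing family of $n$ walls in $Y$. The argument works both when the rank is finite and when it is infinite. The hard part of the whole Fact is item~(4): the construction of enough continuous MP real-valued separating functions, and the verification that their product map is a topological and algebraic embedding, is where the bulk of the technical work in Bowditch's Chapter~3 is absorbed.
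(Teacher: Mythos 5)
The paper offers no proofs here: Fact~\ref{f:facts2} is presented as a list of citations (Fioravanti for item~(2), Bowditch for (3)--(5), van de Vel/Bowditch for (4)), so the only comparison available is between your sketches and the cited arguments. Your items (1) and (2) follow the standard route: (1) rests on the derived identity $ab(xyz)=(abx)(aby)(abz)$ (true, though extracting it from (M1)--(M3) is a known but not one-line exercise), and (2) via $co(C_1\cup C_2)=\bigcup_{a\in C_1,\,b\in C_2}[a,b]=m(C_1\times C_2\times K)$ is essentially Fioravanti's own argument. Items (3) and (4) you correctly defer to Bowditch and van de Vel, as does the paper.

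Item (5), however, contains a genuine gap in the direction $\rank(X)\le\rank(Y)$. Halfspaces of a topological median algebra carry no topological hypothesis, and the $2^n$ orthant regions of a pairwise crossing family may almost all have empty interior, so density of $Y$ does not force $Y$ to meet them. Concretely, in $X=[0,1]^2$ with the coordinatewise median, $H_1=\{0\}\times[0,1]$ and $H_2=[0,1]\times\{0\}$ are halfspaces whose walls cross, yet three of the four regions ($\{(0,0)\}$, $\{0\}\times(0,1]$, $(0,1]\times\{0\}$) have empty interior and are entirely missed by the dense subalgebra $(\Q\cap(0,1))^2$. Local convexity does not rescue this: it supplies convex neighborhoods of points, not interior points of prescribed convex sets. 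The statement is of course still true (a different crossing family lives inside $Y$), but your argument does not produce one. The standard repair, and the route of \cite[Lemma 12.3.4]{BowditchMedian}, is to start from an embedded hypercube $\{x_\sigma\}_{\sigma\in\{0,1\}^n}\subseteq X$, approximate each vertex by $y_\sigma\in Y$, and exploit the fact that every vertex of $\{0,1\}^n$ is a fixed median polynomial in the full vertex set: the corresponding polynomial values $z_\sigma$ computed from the $y_\sigma$ always define a median homomorphism of $\{0,1\}^n$ into the subalgebra of $Y$ generated by the $y_\sigma$, depend continuously on the $y_\sigma$, and reduce to $x_\sigma$ when $y_\sigma=x_\sigma$; Hausdorffness then keeps them pairwise distinct for $y_\sigma$ sufficiently close to $x_\sigma$, yielding an $n$-cube in $Y$.
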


\subsection*{Independent sequences of functions}
\label{s:ind}

Let $f_n: X \to \R$, $n \in \N$ be a uniformly bounded sequence of functions on a \emph{set} $X$. Following Rosenthal \cite{Ro} we say that this sequence is an \emph{$\ell_1$-sequence} on $X$ if there exists a constant $a >0$
such that for all $n \in \N$ and choices of real scalars $c_1, \dots, c_n$ we have
$$
a \cdot \sum_{i=1}^n |c_i| \leq ||\sum_{i=1}^n c_i f_i||_{\infty}.
$$

For every $\ell_1$-sequence $f_n$, its closed linear span in $l_{\infty}(X)$
is linearly homeomorphic to the Banach space $\ell_1$.
In fact, the map $\ell_1 \to l_{\infty}(X), \ \ (c_n) \to \sum_{n \in \N} c_nf_n$ is a linear homeomorphic embedding.

A Banach space $V$ is said to be {\em Rosenthal} if it does not contain an isomorphic copy of $\ell_1$, or equivalently, if $V$ does not contain a sequence which is equivalent to an $\ell_1$-sequence. 
Every Asplund (in particular, every reflexive) Banach space is Rosenthal. 

A bounded sequence $f_n$ of	real valued functions on a set
$X$ is said to be \emph{independent} (see \cite{Ro}) if
there exist real numbers $a < b$ such that
$$
\bigcap_{i \in P} f_i^{-1}(-\infty,a] \cap  \bigcap_{j \in M} f_j^{-1}[b,\infty) \neq \emptyset
$$
for all finite disjoint subsets $P, M$ of $\N$. 
One may replace closed rays $(-\infty,a], [b,\infty)$ by the open rays $(-\infty,a), (b,\infty)$.  
The definition for finite sequences is analogous.  

Clearly every subsequence of an independent sequence is again independent. 
Every infinite independent sequence on a set $X$ is an $\ell_1$-sequence (see \cite{Ro}).  

Let $(X,\leq)$ be a linearly ordered set. Then any family $F$ of order preserving functions $X \to [0,1]$ is tame. Moreover, there are no independent pairs of functions in $F$, \cite{Me-Helly}. 


\begin{defin} \label{d:VariousTameFamil} 
	Let $X$ be a set and $\mathcal{F} \subseteq \R^X$ a family of real-valued functions.
	\begin{enumerate}
		\item  \cite{GM-tLN} We say that $\mathcal{F}$ is a \textbf{tame} family if $\mathcal{F}$ contains no infinite  
		 uniformly bounded independent sequence.

		\item 
		Denote by $\ind(\mathcal{F})$ the supremum of integers $k$ such that $\mathcal{F}$ contains an independent finite sequence of length $k$. We call it 
		the \textbf{independence number} of $\mathcal{F}$. 
		\item In particular, for a family $F \subseteq P(X)$ of subsets in $X$ define $\ind(F)$ as $\ind(\chi_F)$, where $\chi_F:=\{\chi_A \colon X \to \{0,1\}: A \in F\}$ is the set of all corresponding characteristic functions. More precisely, let $X$ be a set and $\mathcal C\subseteq\mathcal P(X)$.
			A finite subfamily $\{C_1,\dots,C_k\}\subseteq\mathcal C$ is (Rosenthal) \emph{independent} if for every
			disjoint $P,M\subseteq\{1,\dots,k\}$ one has
			\[
			\Bigl(\bigcap_{i\in P}C_i\Bigr)\cap\Bigl(\bigcap_{j\in M}(X\setminus C_j)\Bigr)\neq\varnothing.
			\]
			The \emph{independence number} of $\mathcal C$ is
			\[
			\ind(\mathcal C):=\sup\{k:\ \mathcal C\ \text{contains an independent subfamily of size }k\}
			\in\mathbb N\cup\{\infty\}.
			\]
	
		Similarly, the family $F$ is \textit{tame} if $\chi_F$ is tame in the sense of (1). 
	\end{enumerate}  
\end{defin}
  

%
%

\subsection*{Tame Dynamical Systems and representations on Rosenthal spaces} \label{s:WRN}  

By a $G$-\textit{space} $X$ we mean a topological space $X$ with a continuous action $\pi \colon G \times X \to X$ of a topological group $G$.  
Let $G_d$ denote the group $G$ equipped with the discrete topology. A $G_d$-space is simply a
topological space $X$ endowed with an action of the abstract group $G$ by homeomorphisms.
Equivalently, each translation $\pi_g \colon X\to X$ is a homeomorphism, without any requirement of
joint continuity of the map $G\times X\to X$. 
 Dynamical $\Z$-systems (for $G:=\Z$) generated by a homeomorphism $T \colon K \to K$ is said to be a \textit{cascade}.  

Let $X$ be a compact Hausdorff space and $G$ a topological group acting continuously on $X$. 
Recall that the \textit{enveloping} semigroup is defined as the pointwise closure of all $g$-translations. That is, $E(G,X):=cl_p\{\pi_g: g \in G\} \subset X^X$. In general, for compact metrizable $G$-space $X$ its enveloping semigroup might be with cardinality $2^{2^{\omega}}$. In fact, this happens for every non-tame compact metrizable $G$-system (compare Fact \ref{f:RosImpliesTame}). For instance, Bernoulli shift $\Z$-cascade $\{0,1\}^{\Z}$ is not tame.  

A compact $G$-system $X$ is said to be \emph{tame} (see, for example, \cite{Glasner06,Glasner-env07,GM-rose,GM-TC}) if for every continuous real function $f \in C(X)$ its orbit $fG = \{f_g \colon  X \to \R \mid f_g(x) = f(gx), g \in G\}$ is 
combinatorially small; namely, if $fG$ 
is a tame family of functions on $X$. 

Let $V$ be a Banach space and let $\operatorname{Iso}(V)$ be the topological group (with the strong operator topology) of all linear onto isometries $V \to V$. For every continuous homomorphism $h \colon G \to \operatorname{Iso}(V)$, we have a canonically induced dual continuous action on the weak-star compact unit ball $B_{V^*}$ of the dual space $V^*$. So, we get a $G$-space $B_{V^*}$.

A natural question is which continuous actions 
of $G$ on a topological space $X$ can be represented as a $G$-subspace of $B_{V^*}$ for a certain Banach space $V$ from a well-behaved class of (low-complexity) spaces.  
If $V$ is a Rosenthal Banach space then the $G$-space $X$ is said to be \textbf{Rosenthal representable}. If, in addition, $X$ is compact then the 
dynamical system $(G,X)$ is said to be  WRN (\textit{Weakly Radon-Nikodym}) \cite{GM-rose}. In particular, for trivial $G$, this defines the class of WRN compact spaces, which contains the class of all \textit{Radon-Nikodym} (e.g., \textit{Eberlein}) compact spaces (recall that these are classes of all compact spaces which are representable on Asplund (resp. reflexive) Banach spaces). 

Theorem \ref{t:FiniteRankTame} below shows that every compact finite rank median space is WRN. The double arrow space is a compact linearly ordered topological space (hence, rank 1 median space) which is WRN but not RN.    
We rely on the following criterion which uses  Davis-Figiel-Johnson-Pelczy\'nski  factorization technique \cite{DFJP}.  

\begin{f} \label{WRNcriterion}   
	\cite{GM-rose,GM-survey14}  
	Let $X$ be a compact $G$-space. The following conditions are equivalent:
	\begin{enumerate}
		\item $(G,X)$ is Rosenthal representable (that is, $(G,X)$ is $\mathrm{WRN}$). 
		\item There exists a point separating bounded 
		$G$-invariant family $F \subset C(X)$ such that $F$ is a tame family.
	\end{enumerate}
\end{f}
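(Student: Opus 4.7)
My plan is to prove the two implications using Rosenthal's $\ell_1$-dichotomy as the bridge between the combinatorial ``no independent sequence'' condition and the Banach-space condition ``contains no isomorphic copy of $\ell_1$.'' Recall that by Rosenthal's theorem, a bounded sequence in a Banach space fails to have a weakly Cauchy subsequence iff some subsequence is equivalent to the $\ell_1$-basis iff, viewed as continuous functions on the dual ball, it is Rosenthal independent.

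For (1) $\Rightarrow$ (2), assume $X$ embeds $G$-equivariantly as a compact subspace of $(B_{V^*}, w^*)$ for a Rosenthal Banach space $V$ with a continuous homomorphism $G \to \Iso(V)$. I would take $F := \{\hat{v} \rest X : v \in B_V\} \subset C(X)$, where $\hat v$ is the weak*-continuous functional on $V^*$ induced by $v$. This family is bounded, $G$-invariant, and point-separating (because $B_V$ separates $B_{V^*}$). Tameness follows because any infinite independent sequence $\hat{v}_n \rest X$ in $F$ would pull back to a sequence $(v_n) \subset B_V$ that, by the dichotomy above, is equivalent to the $\ell_1$-basis, contradicting the Rosenthal property of $V$.

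For (2) $\Rightarrow$ (1), set $V := \overline{\mathrm{span}}(F \cup \{\mathbf{1}\}) \subseteq C(X)$, a closed $G$-invariant subspace with the supremum norm. The induced representation $G \to \Iso(V)$ is strongly continuous (by uniform boundedness of $F$, continuity of $G \curvearrowright X$, and density of the generators). The canonical evaluation map $\delta \colon X \to (B_{V^*}, w^*)$, $x \mapsto \delta_x$, is continuous, $G$-equivariant, and injective by point-separation of $F$; compactness of $X$ upgrades it to a topological embedding. The remaining and crucial task is to verify that $V$ itself is a Rosenthal Banach space.

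This verification is the main obstacle, since tameness is postulated only on the generators $F$, whereas arbitrary bounded sequences in $V$ come from arbitrary linear combinations and norm limits. My plan is to invoke the closure property that the class of tame families of uniformly bounded functions is stable under passing to pointwise-closed absolutely convex hulls: if $F$ is tame, then so is $\overline{\mathrm{aconv}}^{\,p}(F \cup \{\mathbf{1}\})$. Granting this, the unit ball $B_V$ sits (up to a harmless rescaling) inside a tame family, so no bounded sequence in $V$ is infinitely independent, and Rosenthal's theorem then gives that $V$ contains no copy of $\ell_1$. A self-contained proof of the closure property would assume an independent sequence of convex combinations with witnesses $a < b$ and apply a Ramsey extraction on the sign patterns and supports of the coefficient vectors to pull an infinite independent subsequence back into $F$ itself, contradicting tameness; I expect this combinatorial step, rather than any point-set topology, to be where the real work lies.
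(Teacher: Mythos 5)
The paper does not prove this statement: it is quoted verbatim as a Fact from \cite[Theorem 6.5]{GM-rose}, so there is no internal proof to compare against. Judged on its own terms, your implication (1)~$\Rightarrow$~(2) is correct and is the standard argument: the restrictions $\hat v\rest X$, $v\in B_V$, form a bounded, $G$-invariant, point-separating family, and an independent sequence among them would be an $\ell_1$-sequence already in the sup norm over $X\subseteq B_{V^*}$, hence a fortiori in $V$.

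The direction (2)~$\Rightarrow$~(1) has a genuine gap at exactly the step you flagged as crucial. Taking $V:=\overline{\mathrm{span}}(F\cup\{\mathbf 1\})\subseteq C(X)$ with the supremum norm cannot work: the closed linear span of a tame bounded family need not be a Rosenthal space, and the claimed inclusion of $B_V$ into a rescaled copy of $\overline{\mathrm{aconv}}^{\,p}(F\cup\{\mathbf 1\})$ is false in general (the unit ball of a closed span is not controlled by any multiple of the absolutely convex hull of its generators; compare $F=\{e_n\}$ in $c_0$). Concretely, let $X=[0,1]$, $G$ trivial, let $(h_n)\subset C[0,1]$ be a normalized sequence equivalent to the $\ell_1$-basis, and put $F:=\{h_n/n:\ n\in\N\}\cup\{\mathrm{id}\}$. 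This $F$ is bounded, point-separating and tame (the sequence $h_n/n$ is uniformly null, so no infinite subsequence of $F$ can be independent for any fixed $a<b$), yet $\overline{\mathrm{span}}(F)\supseteq\overline{\mathrm{span}}\{h_n\}\cong\ell_1$; here $\overline{\mathrm{aconv}}(F\cup\{\mathbf 1\})$ is even norm compact and certainly does not absorb $B_V$. So tameness of the generators, even together with convex-hull stability (which is true, being equivalent to the fact that the convex hull of a weakly precompact set is weakly precompact), gives no control over the unit ball of the sup-norm span. The actual proof in \cite{GM-rose} repairs this by changing the norm: one runs a Davis--Figiel--Johnson--Pe{\l}czy\'nski-type interpolation on $W:=\overline{\mathrm{aconv}}(F\cup\{\mathbf 1\})$, producing a Banach space whose unit ball is squeezed between $W$ and the perturbed sets $2^nW+2^{-n}B_{C(X)}$ (so that $G$ still acts by isometries and $X$ still embeds into $B_{V^*}$), and then proves that \emph{this} interpolation space contains no copy of $\ell_1$ --- that factorization theorem for tame families is the technical heart of the result and does not follow from the Ramsey/convexity argument you sketch.
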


Recall that a topological space
$K$ is a \textit{Rosenthal compactum} \cite{BFT} if it is homeomorphic to a pointwise compact 
subset of the space $\B_1(X)$ of functions of the first Baire class on a Polish space $X$. 

One of the crucial steps in the following result is 
Bourgain--Fremlin--Talagrand dichotomy \cite{BFT}. 

\begin{f} \label{f:RosImpliesTame} \cite{GM-rose,GM-AffComp13,GM-survey14,GM-tLN}   
\begin{enumerate}
	\item Every Rosenthal representable compact $G$-space is tame.
	\item For a compact \textbf{metrizable} topological $G$-space $X$ the following are equivalent: 
	\begin{enumerate}
		\item $(G,X)$ is dynamically tame. 
		\item $(G,X)$ is Rosenthal representable. 
		\item Enveloping semigroup $E(G,X) \subset X^X$ is a Rosenthal compact. 
		\item Every $p \in E(G,X)$ is a Baire 1 function $p \colon X \to X$.  
		\item The cardinality of $E(G,X)$ is not greater than $2^{\omega}$.  
	\end{enumerate} 
\end{enumerate} 	
\end{f}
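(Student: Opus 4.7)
For part (1), the plan is to apply the WRN criterion (Fact~\ref{WRNcriterion}). Given a $G$-equivariant embedding $X \hookrightarrow B_{V^*}$ into the weak-star unit ball of a Rosenthal Banach space $V$ carrying an isometric $G$-action, I take $F$ to be the family of evaluation functions $\{e_v \colon X \to \R, \ e_v(x) = \langle v, x\rangle \mid v \in B_V\}$. These are continuous for the weak-star topology, uniformly bounded, point-separating by Hahn--Banach, and $G$-invariant because the action on $X$ is the restriction of the dual isometric action. For tameness of $F$: if some orbit $e_v G$ contained an infinite independent subsequence on $X$, the same sequence would be $\ell_1$-equivalent in $B_{V^*}$ and hence in $V$ itself, contradicting Rosenthal's $\ell_1$-theorem applied to the Rosenthal space $V$. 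Fact~\ref{WRNcriterion} then yields tameness of $(G,X)$.

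For part (2), I would set up a cycle of implications. The direction (b)$\Rightarrow$(a) is part (1). For (a)$\Rightarrow$(b), the plan is again to apply the WRN criterion, but now the delicate point is to upgrade orbit-wise tameness (every $fG$ is a tame family for $f\in C(X)$) to the existence of a single \emph{tame} $G$-invariant point-separating family $F \subseteq C(X)$. One executes this either via a Ramsey-style selection (an infinite independent sequence picked across orbits can, by pigeonholing on a countable dense generating set, be reduced to one inside a single orbit, contradicting the hypothesis) or, more constructively, by applying the Davis--Figiel--Johnson--Pe\l czy\'nski factorization to the weakly closed convex hull of each orbit in a countable dense $G$-invariant family and taking the $\ell_2$-direct sum of the resulting Rosenthal representations. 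The equivalence (a)$\Leftrightarrow$(c) is the dynamical form of the Bourgain--Fremlin--Talagrand dichotomy: a bounded sequence in $\R^X$ with $X$ Polish either admits an $\ell_1$-subsequence or a pointwise-convergent subsequence with Baire-1 limit. Testing pointwise convergence of $(\pi_{g_n}) \subset X^X$ against a countable dense subset of $C(X)$, tameness of $(G,X)$ becomes equivalent to every element of $E(G,X)$ being a Baire-1 map $X \to X$, which is exactly the condition that $E(G,X)$ is a Rosenthal compact. The implication (c)$\Rightarrow$(d) follows from Todor\v{c}evi\'c's cardinality bound $|K| \le 2^{\aleph_0}$ for separable Rosenthal compacta, and (d)$\Rightarrow$(a) from the observation that an $\ell_1$-sequence in $E(G,X)$ would produce $2^{\aleph_0}$ distinct pointwise cluster points indexed by infinite subsets of $\N$, contradicting the cardinality bound.

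The main obstacle, and the deepest ingredient, is the WRN criterion itself (Fact~\ref{WRNcriterion}), used above as a black box. Its non-trivial direction --- producing a Rosenthal Banach representation from purely combinatorial ``no independent sequence'' data --- requires a Davis--Figiel--Johnson--Pe\l czy\'nski style interpolation that converts a tame $G$-invariant separating family into a Banach space whose non-containment of $\ell_1$ is certified by Rosenthal's $\ell_1$-theorem. Once this tool is granted, the remaining content becomes soft: part (1) is a direct application, (a)$\Rightarrow$(b) is a packaging argument, and (a)$\Leftrightarrow$(c)$\Leftrightarrow$(d) reduces to the classical BFT dichotomy plus the standard cardinality estimates for Rosenthal compacta.
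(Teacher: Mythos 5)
The paper does not actually prove this statement: it is imported as a Fact from \cite{GM-rose,GM-tLN}, so there is no internal argument to compare against, and your proposal must be judged against the cited sources. Your outline follows their general route, but it contains genuine gaps. In part (1), the closing appeal to Fact~\ref{WRNcriterion} is circular: that criterion characterizes Rosenthal representability, which is your \emph{hypothesis}, not dynamical tameness, which is the desired \emph{conclusion}. After checking that the evaluation family $F=\{e_v : v\in B_V\}$ is tame, $G$-invariant and point-separating, you still must pass to the orbit $fG$ of an \emph{arbitrary} $f\in C(X)$, and $C(X)$ is much larger than the evaluations. The missing ingredient is the nontrivial fact (from \cite{GM-rose}) that the functions with tame orbit form a norm-closed $G$-invariant subalgebra of $C(X)$; only then does Stone--Weierstrass applied to the algebra generated by $F$ give $\Tame(X)=C(X)$.

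In part (2) there are two further problems. For (a)$\Rightarrow$(b), the ``Ramsey-style'' reduction fails: an infinite independent sequence drawn from a countable union $\bigcup_n f_nG$ of tame families need not meet any single orbit infinitely often (each singleton is a tame family, yet a sequence of singletons can be independent), so the pigeonhole does not produce an independent subsequence inside one orbit. Your DFJP alternative is the correct mechanism and is essentially what \cite{GM-rose} does, but it silently uses that an $\ell_2$-sum of Rosenthal spaces is again Rosenthal. For (d)$\Rightarrow$(a), the cardinality count is too weak: producing $2^{\aleph_0}$ cluster points of an independent sequence does not contradict $|E(G,X)|\le 2^{\omega}$. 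The correct argument is that non-tameness forces the pointwise closure of the translates to contain a homeomorphic copy of $\beta\N$, whence $|E(G,X)|=2^{2^{\aleph_0}}>2^{\omega}$. The (a)$\Leftrightarrow$(c) step via the dynamical Bourgain--Fremlin--Talagrand dichotomy and the elementary bound $|E(G,X)|\le 2^{\aleph_0}$ in (c)$\Rightarrow$(d) are fine as sketched.
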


Low complexity dynamical systems are representable on low complexity Banach spaces. More precisely, a compact metrizable $G$-system is tame (RN, weakly almost periodic) if and only if it is representable on a Rosenthal (Asplund, reflexive) Banach space. See \cite{GM-rose,GM-AffComp13} for more details. 

\begin{remark} \label{r:SemAct1} 
The analogues of Facts~\ref{WRNcriterion} and~\ref{f:RosImpliesTame} for
semigroup actions hold under the standard natural adaptations.	
 The corresponding Rosenthal-representation statements
can be formulated for semigroup actions with the appropriate operator representation framework replacing the group $\Iso(V)$ by the monoid $\Theta(V)$ of all contractive linear operators of $V$ into itself. See, for example, \cite[Remark 1.6]{GM-rose} and especially
\cite{GM-AffComp13} with general semigroup setting. 
The tameness of actions on dendrites is true also for semigroup actions but under the extra-condition that every translation is \textit{monotone} (this is automatic for group actions on dendrites), \cite[Remark 3.18]{GM-D}. This remark will be used below in Remark \ref{r:SemigroupActions2}. 	
\end{remark}

\section{Independence number and tameness in the family of MP functions}

\begin{defin}
	For a median algebra $X$ denote by $\ind(X)$ the independence number of the set $\mathcal{H}(X)$ of all halfspaces in  $X$. 	
\end{defin}

\begin{thm}[Characterization of Rank via Independence number]  \label{t:CrossedWallsAreIndependent} 
Let $X$ be a median algebra. Then the following conditions hold: 
\begin{enumerate}
	\item  A finite sequence $F:=\{A_1, \cdots, A_k\}$ of halfspaces in $X$ is pairwise crossing if and only if $F$ is an independent family of sets in the sense of Rosenthal.   
	\item $\rank(X)=\ind(X)=\ind(\mathcal{M})$, where $\mathcal{M} = \mathcal{M}(X, [0,1])$ is the set of all median-preserving maps $f \colon X \to [0,1]$.  
\end{enumerate} 
\end{thm}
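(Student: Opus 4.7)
The plan is to split the argument into its two claims.

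For part (1), the direction ``independence $\Rightarrow$ pairwise crossing'' is immediate by restricting the independence condition to index sets $P,M\subseteq\{1,\ldots,k\}$ with $|P|+|M|=2$, so the substance lies in the converse. To prove ``pairwise crossing $\Rightarrow$ independence'', I would invoke the Helly property (Fact~\ref{f:facts1}(4)). Fix disjoint $P,M\subseteq\{1,\ldots,k\}$ and look at the finite family
$$\mathcal{C} \;:=\; \{A_i : i\in P\}\cup\{A_j^c : j\in M\}.$$
Every member of $\mathcal{C}$ is convex, because a halfspace is by definition convex and so is its complement (the partner halfspace of the same wall). The pairwise crossing hypothesis asserts exactly that any two members of $\mathcal{C}$ intersect, so Helly delivers $\bigcap\mathcal{C}\neq\varnothing$, which is precisely Rosenthal independence of $F$.

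For part (2), I would first argue $\rank(X)=\ind(X)$ by combining Fact~\ref{f:facts1}(5) with part~(1): a maximal family of pairwise crossing walls has size $\rank(X)$, and choosing one halfspace per wall yields a pairwise crossing family of halfspaces of the same size. Conversely, a pairwise crossing family of halfspaces necessarily comes from distinct walls (since $A\cap A^c=\varnothing$), hence corresponds to a pairwise crossing family of walls of equal size. Together with (1) both quantities equal $\rank(X)$.

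The inequality $\ind(X)\leq\ind(\mathcal{M})$ is clear: for any halfspace $H$ the characteristic function $\chi_H\colon X\to\{0,1\}\subset[0,1]$ is MP (both $H$ and $H^c$ are convex), so an independent family of halfspaces gives, verbatim, an independent subfamily of $\mathcal{M}$. The reverse $\ind(\mathcal{M})\leq\ind(X)$ requires slightly more. Given an independent sequence $f_1,\ldots,f_k\in\mathcal{M}$ witnessed by $a<b$ in $[0,1]$, the MP property of each $f_i$ makes
$$C_i \;:=\; f_i^{-1}[b,1],\qquad D_i \;:=\; f_i^{-1}[0,a]$$
disjoint convex subsets of $X$. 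Applying Kakutani separation (Fact~\ref{f:facts1}(2)) to each pair $(C_i,D_i)$ produces a wall $W_i=\{W_i^0,W_i^1\}$ with $C_i\subseteq W_i^1$ and $D_i\subseteq W_i^0$. The independence of $(f_i)$ then transfers mechanically to the halfspaces $(W_i^1)$: for every choice of disjoint $P,M\subseteq\{1,\ldots,k\}$,
$$\bigcap_{i\in P}W_i^1\;\cap\;\bigcap_{j\in M}W_j^0 \;\supseteq\; \bigcap_{i\in P}C_i\;\cap\;\bigcap_{j\in M}D_j\;\neq\;\varnothing,$$
so $\ind(\mathcal{M})\leq\ind(X)=\rank(X)$.

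The main obstacle, mild as it is, is recognizing that Helly must be applied to the \emph{mixed} family of halfspaces and their complements, and that Kakutani separation is the right device for turning the disjoint convex preimages $(C_i,D_i)$ back into a halfspace $W_i^1$. Once the convexity of complements of halfspaces is kept explicit and the two facts above are isolated, the rest of the argument is essentially bookkeeping.
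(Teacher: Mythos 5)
Your proposal is correct and follows essentially the same route as the paper: the Helly property applied to the mixed family $\{A_i : i\in P\}\cup\{A_j^c : j\in M\}$ for the crossing-implies-independence direction, the trivial restriction to two indices for the converse, characteristic functions of halfspaces for $\ind(X)\le\ind(\mathcal{M})$, and Kakutani separation of the convex level-set preimages for $\ind(\mathcal{M})\le\ind(X)$. The only (harmless) difference is that you make explicit the bookkeeping between pairwise crossing walls and pairwise crossing halfspaces, which the paper leaves implicit.
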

\begin{proof} (1) Let $F=\{A_1,\dots,A_k\}$ be a finite pairwise crossing family of halfspaces. Fix disjoint subsets $P, M \subseteq \{1,\dots,k\}$. Consider the finite family of convex sets
	\[
	\mathcal{C}:=\{A_i:\ i\in P\}\ \cup\ \{A_j^{c}:\ j\in M\}.
	\]
	For any two members of $\mathcal{C}$, their intersection is nonempty by pairwise crossing (indeed, for
	$i\neq j$ we have $A_i^{\varepsilon}\cap A_j^{\delta}\neq\varnothing$ for all $\varepsilon,\delta\in\{0,1\}$).
	Hence $\mathcal{C}$ is pairwise intersecting, and by the Helly property (Fact~\ref{f:facts1}.4) we obtain
	$
	\bigcap \mathcal{C}\neq\varnothing.
	$ 
	This gives the independence condition for $F=\{A_1,\dots,A_k\}$.
	
Conversely, assume that $F=\{A_1,\dots,A_k\}$  is independent. 
Then for every disjoint $P,M\subseteq\{1,\dots,k\}$,
\[
\bigcap_{i\in P} A_i \ \cap \ \bigcap_{j\in M} A_j^c  \neq \emptyset.
\]
Fix $i\neq j$. Applying this with 
$
(P,M)\in \bigl\{(\emptyset,\{i,j\}),(\{i\},\{j\}),(\{j\},\{i\}),(\{i,j\},\emptyset)\bigr\}, 
$
yields
\[
A_i\cap A_j\neq\emptyset,\quad A_i^c\cap A_j\neq\emptyset,\quad A_i\cap A_j^c\neq\emptyset,\quad A_i^c\cap A_j^c\neq\emptyset,
\]
so $A_i$ and $A_j$ are crossing. Hence $F$ is pairwise crossing.


(2) $\rank(X)=\ind(X)$ directly follows from (1) and Fact \ref{f:facts1}.5. Now we show that $\ind(X)=\ind(\mathcal{M})$.  
Since $\chi_{\mathcal{H}}:=\{\chi_A \colon X \to \{0,1\}: A \in \mathcal{H}(X)\} \subseteq \mathcal{M}$, we have 
$$
\ind(X)=\ind(\chi_{\mathcal{H}}) \leq \ind (\mathcal{M}). 
$$
Conversely, Suppose $\{f_1, \dots, f_k\}$ is an independent finite sequence in $\mathcal{M}(X, [0,1])$. 
By the definition of independence, there exist constants 
$a < b$ such that for every disjoint $P, M \subseteq \{1,\cdots,k\}$ we have 
$$
\bigcap_{i \in P} f_i^{-1}(-\infty,a] \cap  \bigcap_{j \in M} f_j^{-1}[b,\infty) \neq \emptyset.
$$
Consider the sublevel and superlevel sets:
\[ L_i := f_i^{-1}[0, a] \quad \text{and} \quad R_i := f_i^{-1}[b, 1]. \]
Since $f_i$ is a median homomorphism, $L_i$ and $R_i$ are disjoint \textit{convex} subsets of $X$ (Fact \ref{f:facts1}.1). 
We now appeal to the algebraic structure of median algebras. 
By Fact \ref{f:facts1}.2  
every median algebra satisfies the Kakutani separation property. This means that for the disjoint convex sets $L_i$ and $R_i$, there exists a wall (a convex  partition) $W_i = \{A_i, B_i\}$ such that:
\[ L_i \subseteq A_i, \quad R_i \subseteq B_i, \quad A_i \cap B_i = \emptyset, \quad A_i \cup B_i = X. \]	

Then the family $\{A_1, \dots, A_k\}$ of halfspaces is independent. 
Indeed, for disjoint $P,M\subseteq\{1,\dots,k\}$, choose
\[
x\in \bigcap_{i\in P}L_i\ \cap\ \bigcap_{j\in M}R_j,
\]
which exists by the independence of $\{f_1,\dots,f_k\}$. Then, since $L_i\subseteq A_i$ and
$R_j\subseteq B_j$ for all $i,j$, we obtain
\[
x\in \bigcap_{i\in P}A_i\ \cap\ \bigcap_{j\in M}B_j,
\]
where $B_j=A_j^c$. 
Hence $\{A_1,\dots,A_k\}$ is independent. 
Therefore, 
$\ind(\chi_{\mathcal{H}}) \geq \ind(\mathcal{M})$.  
\end{proof}

See also a characterization in terms VC-dimension (Lemma \ref{l:IndEqualsDualVCSubsec} and Remark \ref{r:DualVCmedian}). 

Below we denote by $\mathcal{MC} = \mathcal{MC}(X, [0,1])$ the class of all continuous median-preserving maps $f \colon X \to [0,1]$ on a tma $X$. 

\begin{thm} \label{t:RankCharacterization}  \ 
	\begin{enumerate}
		\item $\ind(\mathcal{MC}) \leq \rank(X)$ for every topological median algebra $X$.
		\item $\ind(\mathcal{MC}) = \rank(X)$ for every  finite rank \textbf{compact} topological median algebra $X$.
	\end{enumerate}  	    
\end{thm}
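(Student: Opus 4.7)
Part~(1) is immediate: continuous median-preserving maps form a subfamily of all median-preserving maps, so $\mathcal{MC} \subseteq \mathcal{M}$, and Theorem~\ref{t:CrossedWallsAreIndependent} gives $\ind(\mathcal{MC}) \le \ind(\mathcal{M}) = \rank(X)$.

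For part~(2), set $n := \rank(X)$; only the inequality $\ind(\mathcal{MC}) \ge n$ requires work. The plan is to reduce the problem to a finite subalgebra of $X$ sitting inside a \emph{finite} Tikhonov cube, where halfspaces are coordinate halfspaces and the corresponding coordinate projections supply the desired independent family. By Facts~\ref{f:facts2}(3)--(4), $X$ is locally convex and MP-embeds as a subalgebra of $[0,1]^{\kappa}$; identify $X$ with its image. Fact~\ref{f:facts1}(5) yields $n$ pairwise crossing walls $W_{1},\dots,W_{n}$ in $X$, and combining pairwise crossing with the Helly property (Fact~\ref{f:facts1}(4)) produces, for every partition $\{1,\dots,n\}=P\sqcup M$, a witness $z_{P,M} \in \bigcap_{i\in P}W_{i}^{0}\cap\bigcap_{j\in M}W_{j}^{1}$. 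Since the set $Z:=\{z_{P,M}\}$ is finite and finitely generated median algebras are finite (a classical result in median algebra theory), the median subalgebra $Y \subseteq X$ generated by $Z$ is finite. Moreover the sets $W_{i}\cap Y$ are genuine walls of $Y$ (convexity is inherited from $X$ and the witnesses supply nonempty parts), and they are pairwise crossing in $Y$.

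Now pick a finite $F \subseteq \kappa$ such that $\pi_{F}\colon Y\to[0,1]^{F}$ is injective; then $Y$ is a subalgebra of the finite product $[0,1]^{F}$, in which every halfspace is a coordinate halfspace (a standard consequence of the description of prime filters in a finite product of totally-ordered distributive lattices). By Fact~\ref{f:facts1}(3) each $W_{i}\cap Y$ is the restriction of a coordinate wall of $[0,1]^{F}$: there exist a coordinate $\alpha_{i} \in F$ and a threshold $t_{i} \in (0,1)$ with $(W_{i}\cap Y)^{0} = Y \cap \pi_{\alpha_{i}}^{-1}(H_{i})$ where $H_{i} = [0,t_{i}]$ or $[0,t_{i})$. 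Two coordinate walls at the same coordinate are nested and never crossing, so pairwise crossing forces $\alpha_{1},\dots,\alpha_{n}$ to be pairwise distinct. Set $f_{i} := \pi_{\alpha_{i}}|_{X} \in \mathcal{MC}$; then $f_{i}(z_{P,M}) \le t_{i}$ for $i \in P$ and $f_{j}(z_{P,M}) > t_{j}$ for $j \in M$. Choose $\varepsilon_{i} > 0$ with $f_{i}(z_{P,M}) \ge t_{i} + \varepsilon_{i}$ whenever $i \in M$ (possible since $Z$ is finite), and take continuous non-decreasing $\varphi_{i} \colon [0,1]\to[0,1]$ with $\varphi_{i} \equiv 0$ on $[0,t_{i}]$ and $\varphi_{i} \equiv 1$ on $[t_{i}+\varepsilon_{i},1]$. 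Then $g_{i} := \varphi_{i} \circ f_{i} \in \mathcal{MC}$ (monotone self-maps of $[0,1]$ are MP and MP is preserved by composition), and by construction $g_{i}(z_{P,M}) = 0$ for $i \in P$ and $g_{j}(z_{P,M}) = 1$ for $j \in M$; hence $\{g_{1},\dots,g_{n}\}$ is independent at levels $(a,b) = (0,1)$.

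The main obstacle is justifying the reduction through a finite subalgebra. Two classical inputs carry the argument: the finiteness of finitely generated median algebras (so that $Y$ and the set $F$ of distinguishing coordinates are finite), and the coordinate description of halfspaces in finite products of totally-ordered distributive lattices (so that the $W_{i}\cap Y$ extend to coordinate walls of $[0,1]^{F}$). The latter is essential, since for infinite $\kappa$ the cube $[0,1]^{\kappa}$ carries genuinely non-coordinate halfspaces (for example those built from non-principal ultrafilters on $\kappa$), so working in $[0,1]^{\kappa}$ directly, without first passing to $[0,1]^{F}$, would leave no guarantee that the ambient extensions of the $W_{i}$ are coordinate. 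Once these two inputs are granted, the distinct-coordinates argument from pairwise crossing and the monotone post-composition that uniformizes the thresholds are routine.
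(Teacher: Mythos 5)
Your part (1) coincides with the paper's argument. Part (2) is correct but takes a genuinely different route. The paper starts from an embedded hypercube $\iota\colon\{0,1\}^k\hookrightarrow X$, extends its coordinate walls to walls of $X$ via Fact~\ref{f:facts1}.3, forms the convex hulls of the two faces of each wall (compact by Fact~\ref{f:facts2}.2), and separates these disjoint compact convex sets by a continuous convexity-preserving function using van de Vel's separation axiom $FS_4$, available because $X$ is compact and locally convex with compact intervals; the witnesses for independence are then the hypercube vertices themselves. You instead start from $n$ pairwise crossing walls of $X$ (Fact~\ref{f:facts1}.5) with Helly witnesses, pass to the finite median subalgebra $Y$ they generate, push $Y$ into a finite cube $[0,1]^{F}$ through the global embedding $X\hookrightarrow[0,1]^{\kappa}$ of Fact~\ref{f:facts2}.4, identify the crossing walls with coordinate walls at pairwise distinct coordinates, and take the corresponding coordinate projections (post-composed with monotone maps) as the independent family. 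Both routes rest on the same underlying separation machinery (compactness plus local convexity), but yours needs two classical inputs the paper never invokes: that finitely generated median algebras are finite, and that every wall of a product of median algebras is a coordinate wall. Both are true and standard, and you are right that the detour through a \emph{finite} cube is essential, since $[0,1]^{\kappa}$ for infinite $\kappa$ has non-coordinate halfspaces. What your version buys is explicit separating functions (actual coordinate projections of the canonical embedding); what it costs is the extra combinatorial overhead of the finite reduction, which the paper's direct appeal to $FS_4$ avoids.

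Two small repairs. First, when the extended coordinate wall is of the half-open form $H_j=[0,t_j)$, a witness on the $1$-side satisfies only $f_j(z_{P,M})\ge t_j$, not $f_j(z_{P,M})>t_j$, so your $\varphi_j$ (constantly $0$ on $[0,t_j]$) could send it to $0$; since $Y$ is finite, place the threshold strictly between $\max f_j\bigl((W_j\cap Y)^{0}\bigr)$ and $\min f_j\bigl((W_j\cap Y)^{1}\bigr)$ and the monotone reparametrization then works as intended. Second, independence requires witnesses for all \emph{disjoint} pairs $P,M$, not only partitions; extending $(P,M)$ to a partition and using its witness settles this, as your construction implicitly allows.
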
    
\begin{proof} 
	(1) By Theorem \ref{t:CrossedWallsAreIndependent} $\ind(\mathcal{M}) = \rank(X)$, where $\mathcal{M} = \mathcal{M}(X, [0,1])$. Since $\mathcal{MC} \subseteq \mathcal{M}$, we have (for every topological median algebra) 
	$$
	\ind(\mathcal{MC}) \leq \ind(\mathcal{M}) = \rank(X).
	$$ 
	
	(2) It is enough to show that $\rank(X) \leq \ind(\mathcal{MC}(X,[0,1]))$ for compact finite rank $X$. 

Let $\rank(X)=k \in \N$. 
By Definition~\ref{d:MedianRank}, there exists an embedding of median algebras
$
\iota \colon \{0,1\}^k \hookrightarrow X .
$
Let
$
Q:=\iota(\{0,1\}^k)\subset X.
$
Then $Q$ is a \emph{finite} median subalgebra of $X$. For each $1\le j\le k$, let
\[
A_j:=\iota(\{x\in\{0,1\}^k:\ x_j=0\}),\qquad
B_j:=\iota(\{x\in\{0,1\}^k:\ x_j=1\}),
\]
and set $W_j:=\{A_j,B_j\}$.
Then $W_1,\dots,W_k$ are walls in $Q$ and they are pairwise crossing (this is immediate for the coordinate walls of $\{0,1\}^k$).

By the separation property of walls in median subalgebras (Fact \ref{f:facts1}.3), each wall $W_j$ in $Q$ extends to a wall 
$\widetilde W_j=\{H^0_j,H^1_j\}$ in $X$
such that
\[
H^0_j\cap Q=A_j\qquad\text{and}\qquad H^1_j\cap Q=B_j.
\]
In particular, $A_j\subseteq H^0_j$ and $B_j\subseteq H^1_j$. Since $H^0_j$ and $H^1_j$ are convex in $X$,
it follows that
\[
\operatorname{co}(A_j)\subseteq H^0_j
\qquad\text{and}\qquad
\operatorname{co}(B_j)\subseteq H^1_j,
\]
and therefore $\operatorname{co}(A_j)\cap \operatorname{co}(B_j)=\varnothing$ for every $j$.
Moreover, as $X$ is compact and $A_j,B_j$ are finite, Fact~\ref{f:facts2}.2 yields that $\operatorname{co}(A_j)$ and
$\operatorname{co}(B_j)$ are compact convex subsets of $X$.
		
	Every compact finite-rank algebra is locally convex (Fact \ref{f:facts2}.3) and has compact intervals (Fact \ref{f:facts2}.1).  
	Hence we can apply the functional separation property $FS_4$
	(see \cite[Proposition III.4.13.3]{Vel-book}) to the disjoint compact convex sets
	$\operatorname{co}(A_j)$ and $\operatorname{co}(B_j)$. 	
	This yields a continuous separating map $f_j\colon X\to[0,1]$ such that 
	$f_j(\operatorname{co}(A_j))\subseteq [0,\tfrac13]$ and
	$f_j(\operatorname{co}(B_j))\subseteq [\tfrac23,1]$,
	which is convexity-preserving in the sense of \cite{Vel-book}.
	Consequently, $f_j$ is median-preserving by Fact~\ref{f:facts1}.1.

We now verify that the MP functions $f_1,\dots,f_k$ from $\mathcal{MC}$ form an independent family.
Fix $a=\tfrac13$ and $b=\tfrac23$. 
Let $P,M\subseteq\{1,\dots,k\}$ be arbitrary disjoint sets. Choose any $\sigma\in\{0,1\}^k$ such that
$\sigma_i=0$ for $i\in P$ and $\sigma_i=1$ for $i\in M$, and let $x_\sigma:=\iota(\sigma)\in Q\subseteq X$.
Then, by construction,
\[
x_\sigma\in \bigcap_{i\in P}A_i\ \cap\ \bigcap_{j\in M}B_j.
\]
Since $A_i\subseteq \operatorname{co}(A_i)$ and $B_j\subseteq \operatorname{co}(B_j)$, and since
$f_i(\operatorname{co}(A_i))\subseteq [0,a]$ and $f_j(\operatorname{co}(B_j))\subseteq [b,1]$, we obtain
\[
x_\sigma\in \bigcap_{i\in P} f_i^{-1}((-\infty,a])\ \cap\ \bigcap_{j\in M} f_j^{-1}([b,\infty)).
\]
As $P$ and $M$ were arbitrary, this proves independence. Hence
\[
\ind(\mathcal{MC}(X,[0,1]))\ge k=\rank(X).
\]
This completes the proof. 
\end{proof}  
   
Recall that a median algebra has \textit{subinfinite rank} in the sense of Bowditch  \cite[page 116]{BowditchMedian} 
if any set of pairwise-crossing halfspaces is finite 
($\omega$-\textit{dimension} in terms of Roller \cite{Roller} and Guralnik \cite[Definition 2.2]{Gur04}). It is a natural generalization of finite rank spaces.  

\begin{remark}  \label{r:B-tame} 	
By Theorem \ref{t:CrossedWallsAreIndependent}.1, a family of halfspaces in a median algebra is independent if and only if it is
pairwise crossing. Therefore, $X$ has subinfinite rank if and only if the family of halfspaces
$H(X)$ is tame in the Boolean sense; equivalently, the family of characteristic functions
\[
\{\chi_H:X\to\{0,1\}: H\in H(X)\}
\]
contains no infinite independent sequence.  	 
\end{remark}

\begin{ex}[Not finite rank but subinfinite] \label{p:SubinfiniteNotFiniteRank}
	There exists a median algebra $X$ which is Boolean-tame (equivalently, has subinfinite rank) but has $\rank(X)=\infty$.
\end{ex}

\begin{proof}
	For each $n\in\N$ let
	\(
	Q_n:=\{0,1\}^n
	\) 
	be the Boolean $n$-cube as a median algebra, and let $o_n:=(0,\dots,0)\in Q_n$.
	Define the \emph{wedge} (bouquet)
	\[
	X \ :=\ \bigvee_{n\ge 1} Q_n= \bigvee_{n\ge 1}\{0,1\}^n
	\]
	to be the disjoint union $\bigsqcup_{n\ge 1} Q_n$ where all basepoints $o_n$ are identified to a single point $o\in X$. 

	We equip $X$ with a median operation as follows. If $x,y,z$ all belong to the same cube $Q_n$ (viewed inside $X$),
	set $m(x,y,z)$ to be their median in $Q_n$. Otherwise, let $Q_n$ be a cube containing at least two of the points
	(say $x,y\in Q_n$; necessarily $o\in Q_n$), and define
	$
	m(x,y,z)\ :=\ m_{Q_n}(x,y,o).
	$
	If $x,y,z$ lie in three distinct cubes, put $m(x,y,z):=o$. 
	Equivalently, $X$ is the wedge of the pointed median graphs $(Q_n,o_n)$, and the above ternary
	operation is the usual median operation on this wedge; in particular it satisfies the median axioms.
	
	\smallskip
	
	\noindent\emph{Step 1: $\rank(X)=\infty$.}
	For every $n$, the natural inclusion $Q_n\hookrightarrow X$ is a median embedding, hence $\rank(X)\ge n$ for all $n$.
	Therefore $\rank(X)=\infty$.
	
	\smallskip
	
	\noindent\emph{Step 2: $X$ has subinfinite rank.}  Let $H \subseteq X$ be a halfspace. Then exactly one of
	$H$ and $H^c$ contains the wedge point $o$. 
	Assume first that $o \notin H$ (so $o \in H^c$). We claim that then $H$ is contained in a
	single cube $Q_n$. Indeed, if $H$ contained points $x \in Q_n \setminus \{o\}$ and
	$y \in Q_m \setminus \{o\}$ with $m \ne n$, then convexity of $H$ would force
	$[x,y] \subseteq H$, but the interval $[x,y]$ in the wedge passes through $o$, contradicting
	$o \notin H$.
	
	Therefore, for every halfspace $H \subseteq X$, one of the two halfspaces $H$ or $H^c$
	is contained in a single cube $Q_n$. Equivalently, every wall of $X$ is supported on a
	single cube.
	
	Now let $\{H_1,H_1^c\}$ and $\{H_2,H_2^c\}$ be two walls supported on distinct cubes
	$Q_n$ and $Q_m$, where $n \ne m$. Without loss of generality, assume
	$H_1 \subseteq Q_n$ and $H_2 \subseteq Q_m$. Then $H_1 \cap H_2 = \varnothing$, so these
	two walls cannot be crossing. Hence any pairwise crossing family of walls in $X$ must be
	supported on a single cube $Q_n$. But $Q_n$ has rank $n$, so it admits no pairwise
	crossing family of walls of size $>n$. Therefore every pairwise crossing family of walls in
	$X$ is finite. 
\end{proof}

%
%

\subsection*{Independence number and dual VC-dimension}\label{s:DualVCDim}

	Theorems~\ref{t:CrossedWallsAreIndependent} and~\ref{t:RankCharacterization} show that rank of median algebras 
is closely related to independence/shattering complexity of the family of walls. 
Compare with the role of VC dimension and NIP in \cite{SimonNIP} and with related ``no-independence''
conditions in dynamics \cite{KL,GM-tLN}. Note that the model-theoretic \emph{NIP} (``no independence property'')
was introduced by Shelah \cite{Shelah04}. Tame dynamical systems is the dynamical analog of NIP. 

We recall two definitions: VC-dimension (Vapnik–Chervonenkis) and the dual set system. 
For background and further references we refer, for example, to \cite[\S 6.1]{SimonNIP} and \cite[Definition~2.9]{DCG-dualVC}.  
For the reader's convenience we include a short 
discussion, and explain how these
notions relate to the independence number. 
Variants of this viewpoint appear in the literature in connection with learning theory and NIP.


\begin{defin}[Vapnik–Chervonenkis dimension] \label{d:VCdimSubsec}
	Let $X$ be a set and let $\mathcal C\subseteq\mathcal P(X)$ be a family of subsets of $X$.
	A finite set $S\subseteq X$ is \emph{shattered} by $\mathcal C$ if
	\[
	\{C\cap S:\ C\in\mathcal C\}=\mathcal P(S).
	\]
	The \emph{VC-dimension} of $\mathcal C$ is
	\[
	\VC(\mathcal C):=\sup\{|S|:\ S\subseteq X\ \text{finite and shattered by }\mathcal C\}
	\in\mathbb N\cup\{\infty\}.
	\]
\end{defin}

\begin{defin}[Dual set system]\label{d:DualSetSystemSubsec}
	Let $X$ be a set and let $\mathcal C\subseteq\mathcal P(X)$. Define a map
	\[
	\Phi:\ X\longrightarrow \mathcal P(\mathcal C),
	\qquad
	\Phi(x):=\{\,C\in\mathcal C:\ x\in C\,\}.
	\]
	For $x\in X$ put $R_x:=\Phi(x)\subseteq\mathcal C$. The \emph{dual set system} of $\mathcal C$
	is the family
	\[
	\mathcal C^{*}:=\{R_x:\ x\in X\}\subseteq\mathcal P(\mathcal C),
	\]
	viewed as a set system on the ground set $\mathcal C$.
	See, for example, \cite[\S 6.1]{SimonNIP} or \cite[Definition 2.9]{DCG-dualVC}.
\end{defin}

\begin{lem}[Independence number equals the dual VC-dimension] \label{l:IndEqualsDualVCSubsec}
	For every set system $\mathcal C\subseteq\mathcal P(X)$ one has
	$
	\ind(\mathcal C)\ =\ \VC(\mathcal C^{*}).
	$
\end{lem}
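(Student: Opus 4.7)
The plan is to show that both quantities measure the same combinatorial phenomenon, namely that all $2^k$ sign patterns on some $k$-tuple $(C_1,\dots,C_k)$ from $\mathcal C$ are realized by elements of $X$. The argument is a direct unfolding of definitions, so I would present it as a single equivalence rather than two inequalities.

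First I would observe that in Definition~\ref{d:VariousTameFamil}(3) the condition of independence for a fixed $k$-tuple $\{C_1,\dots,C_k\}$ is equivalent to requiring nonemptiness only for \emph{partitions} $P\sqcup M=\{1,\dots,k\}$. Indeed, if $P,M$ are merely disjoint with $R:=\{1,\dots,k\}\setminus(P\cup M)\ne\varnothing$, then
\[
\Bigl(\bigcap_{i\in P}C_i\Bigr)\cap\Bigl(\bigcap_{j\in M}C_j^{c}\Bigr)
\ \supseteq\
\Bigl(\bigcap_{i\in P}C_i\Bigr)\cap\Bigl(\bigcap_{j\in M\cup R}C_j^{c}\Bigr),
\]
so the partition case is the most restrictive. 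This is a one-line remark, not a real step.

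Next I would unpack the VC side. By Definition~\ref{d:DualVCSubsec} (or rather Definition~\ref{d:DualSetSystemSubsec} combined with Definition~\ref{d:VCdimSubsec}), we have $\VC(\mathcal C^{*})\ge k$ iff there exists $S=\{C_1,\dots,C_k\}\subseteq\mathcal C$ (the ground set of $\mathcal C^{*}$) with $|S|=k$ which is shattered by $\mathcal C^{*}=\{R_x:x\in X\}$, i.e.\ for every $T\subseteq S$ there is $x\in X$ with $R_x\cap S=T$. Unwinding the definition $R_x=\{C\in\mathcal C:x\in C\}$, this means: for every $T\subseteq\{C_1,\dots,C_k\}$ there exists $x\in X$ such that $x\in C_i$ iff $C_i\in T$.

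Finally I would match the two conditions on the same tuple $(C_1,\dots,C_k)$. Given a partition $P\sqcup M=\{1,\dots,k\}$, set $T:=\{C_i:i\in P\}$; conversely, given $T\subseteq S$, set $P:=\{i:C_i\in T\}$ and $M:=\{1,\dots,k\}\setminus P$. Under this bijection between partitions $(P,M)$ and subsets $T\subseteq S$, the existence of an element
\[
x\in \Bigl(\bigcap_{i\in P}C_i\Bigr)\cap\Bigl(\bigcap_{j\in M}C_j^{c}\Bigr)
\]
is identical with the existence of an $x\in X$ satisfying $R_x\cap S=T$. Therefore $\{C_1,\dots,C_k\}$ is an independent family iff $\{C_1,\dots,C_k\}$ is shattered by $\mathcal C^{*}$. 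Taking suprema over $k$ yields $\ind(\mathcal C)=\VC(\mathcal C^{*})$, with the convention that both sides equal $\infty$ when no finite upper bound exists. There is no real obstacle here; the only mild care is the partition-versus-disjoint reformulation in the preliminary step.
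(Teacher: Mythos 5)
Your proposal is correct and follows essentially the same route as the paper: both proofs unwind the definitions to identify independence of a $k$-tuple $\{C_1,\dots,C_k\}$ with its being shattered by the dual system $\mathcal C^{*}$, via the correspondence between partitions $(P,M)$ and subsets $T$. Your preliminary remark that one may restrict to partitions $P\sqcup M=\{1,\dots,k\}$ is implicit in the paper's reformulation and is a harmless (indeed clarifying) addition.
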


\begin{proof}
	Let $F=\{C_1,\dots,C_k\}\subseteq\mathcal C$. 
	By Definition~2.4(3), the family $F$ is independent if and only if for every decomposition
	$
	F=T\sqcup(F\setminus T)
	$
	there exists $x\in X$ such that
	\[
	x\in C_i \quad \text{for all } C_i\in T,
	\qquad\text{and}\qquad
	x\notin C_j \quad \text{for all } C_j\in F\setminus T.
	\]
	Equivalently, for every subset $T\subseteq F$ there exists $x\in X$ such that
	\(
	C_i\in T \iff x\in C_i .
	\)
	
	On the other hand, $F$ is shattered by the dual system $\mathcal C^{*}$ if and only if for every
	$T\subseteq F$ there exists $x\in X$ with
	\[
	R_x\cap F=T,
	\qquad\text{where }R_x=\{C\in\mathcal C:\ x\in C\}.
	\]
	Since $R_x\cap F=\{C_i\in F:\ x\in C_i\}$, these two conditions are equivalent.
	Therefore the maximal size of an independent family in $\mathcal C$ equals the maximal size of a
	subset of $\mathcal C$ shattered by $\mathcal C^{*}$, i.e.\ $\ind(\mathcal C)=\VC(\mathcal C^{*})$.
\end{proof}

The number $\VC(\mathcal C^{*})$ is often called the
\emph{dual VC-dimension} of $\mathcal C$.  
See \cite[\S 6.1]{SimonNIP}.

\begin{remark}[Rank via VC-dimension] \label{r:DualVCmedian}
	Let $X$ be a median algebra and let $\mathcal H(X)\subseteq\mathcal P(X)$ be the family of all halfspaces. 
%
	If $X$ has finite rank $n$, then by Theorem~\ref{t:CrossedWallsAreIndependent},  
	$
	\ind(\mathcal H(X))=\rank(X)=n.
	$
	Therefore, by Lemma~\ref{l:IndEqualsDualVCSubsec},
	$
	\VC(\mathcal H(X)^{*})=\rank(X)=n,
	$
	that is, the rank of $X$ coincides with the dual VC-dimension of the halfspace system $\mathcal H(X)$.
\end{remark}


\section{Generalized Helly selection principle}
\label{s:Helly}

The following result  
 relies purely on the algebraic finite-rank property and requires no topological assumptions on X. For the specific case of linearly ordered sets, which naturally form rank-1 median algebras under the standard betweenness relation, this theorem was established in \cite{Me-Helly}, which serves as a generalization of the classical Helly selection theorem for functions on $X \subset \R$.  
 
 Although the main emphasis of this paper is on finite-rank median algebras, the proof below works
 more generally for subinfinite-rank median algebras.

\begin{thm}[Helly Selection Principle for median spaces and MP functions] \label{t:GenHelly}
	Let $X$ be a subinfinite (e.g. finite) rank median algebra. Let $\{f_n \colon X\to \R\}_{n\in\N}$ be a uniformly bounded
	sequence of median-preserving maps. Then $\{f_n\}$ admits a pointwise convergent subsequence, and
	its pointwise limit $f \colon X\to\R$ is again median-preserving.
\end{thm}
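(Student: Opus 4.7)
The plan is to combine Rosenthal's $\ell_1$ dichotomy with a sequence-level strengthening of Theorem~\ref{t:CrossedWallsAreIndependent}. After rescaling (any affine change of a uniformly bounded real MP function into $[0,1]$ is again MP), I may assume each $f_n$ lies in $\mathcal{M}=\mathcal{M}(X,[0,1])$. The analytic engine will be Rosenthal's theorem \cite{Ro}: a uniformly bounded sequence of real functions on a set that contains no infinite independent subsequence already admits a pointwise convergent subsequence.

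The main step is to verify that, under subinfinite rank, the family $\mathcal{M}$ contains no infinite independent sequence. This is the infinite analogue of the finite statement used inside Theorem~\ref{t:CrossedWallsAreIndependent}(2), and I would argue it by the same halfspace extraction, now performed uniformly. Suppose, for contradiction, that $\{g_n\}_{n\in\N}\subseteq\mathcal{M}$ is independent with uniform witnesses $a<b$. For every $n$, the sublevel set $L_n:=g_n^{-1}[0,a]$ and superlevel set $R_n:=g_n^{-1}[b,1]$ are disjoint convex subsets of $X$ by Fact~\ref{f:facts1}.1, both nonempty by independence. Kakutani separation (Fact~\ref{f:facts1}.2) yields a wall $\{A_n,B_n\}$ with $L_n\subseteq A_n$ and $R_n\subseteq B_n$. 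The uniformity of $a,b$ transports independence from $\{g_n\}$ to the halfspace family $\{A_n\}_{n\in\N}$, and then Theorem~\ref{t:CrossedWallsAreIndependent}(1) (which is a per-pair statement and so applies verbatim in the infinite setting) promotes independence to an infinite pairwise crossing family of halfspaces. This contradicts the subinfinite rank hypothesis (Remark~\ref{r:B-tame}).

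Having ruled out infinite independent subsequences in $\mathcal{M}$, Rosenthal's dichotomy produces a subsequence $\{f_{n_k}\}$ converging pointwise to some bounded $f\colon X\to\R$. To see that $f$ remains median-preserving, I would use only the continuity of the standard median $\med\colon\R^3\to\R$: for every $x,y,z\in X$,
\[
f(m(x,y,z))=\lim_{k}f_{n_k}(m(x,y,z))=\lim_{k}\med\bigl(f_{n_k}(x),f_{n_k}(y),f_{n_k}(z)\bigr)=\med\bigl(f(x),f(y),f(z)\bigr).
\]

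The main obstacle is precisely the first step: upgrading Remark~\ref{r:B-tame} (Boolean-tameness, which concerns only indicators of halfspaces) to tameness of the full MP family $\mathcal{M}$. Once one notices that the sublevel/superlevel argument in the proof of Theorem~\ref{t:CrossedWallsAreIndependent}(2) depends only on the two fixed witnessing constants and therefore applies uniformly to an entire infinite independent sequence, the halfspace extraction goes through and the rest of the proof is formal.
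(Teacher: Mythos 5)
Your proposal is correct and follows essentially the same route as the paper: rescale into $[0,1]$, invoke Rosenthal's dichotomy, rule out an infinite independent subsequence by extracting separating walls via Kakutani separation from the sublevel/superlevel convex sets (contradicting subinfinite rank, i.e.\ Boolean-tameness as in Remark~\ref{r:B-tame}), and pass the median identity to the pointwise limit by continuity of $\med$. The only cosmetic difference is that you establish tameness of $\mathcal{M}$ up front while the paper runs the same halfspace extraction inside the contradiction branch of the dichotomy.
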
 
\begin{proof}
	By Remark~\ref{r:B-tame}, subinfinite rank is equivalent to Boolean-tameness. Hence,
	by assumption, $X$ is Boolean-tame.
	
	Let $M:=\sup_{n\in\N}\|f_n\|_\infty<\infty$. If $M=0$, there is nothing to prove.
	Otherwise define the affine increasing homeomorphism
	$
	\alpha \colon [-M,M]\to[0,1], \  \alpha(t)=\frac{t+M}{2M},
	$
	and put
	$
	g_n:=\alpha\circ f_n \colon X\to[0,1].
	$
	Since $\alpha$ is affine and increasing (hence order-preserving), it preserves the
	median on $\R$, so each $g_n$ is median-preserving. Clearly, $\{g_n\}$ is uniformly
	bounded. Moreover, $g_{n_k}\to g$ pointwise if and only if
	$f_{n_k}\to \alpha^{-1}\circ g$ pointwise. Thus it is enough to prove the theorem for
	$[0,1]$-valued maps.
	
	By Rosenthal's dichotomy theorem \cite[Theorem~1]{Ro}, every bounded sequence of
	real-valued functions on a set admits a subsequence which is either pointwise convergent
	or equivalent to the usual $\ell_1$-basis. More concretely, if a bounded sequence
	$(g_n)$ has no pointwise convergent subsequence, then, following the proof of
	Theorem~1 in \cite{Ro}, for every infinite $L \subseteq \mathbb{N}$ one defines
	\[
	\delta(L):=\sup_{x\in X}\Big(\limsup_{n\in L} g_n(x)-\liminf_{n\in L} g_n(x)\Big).
	\]
	By \cite[Lemma~5]{Ro}, there exists an infinite subset $Q\subseteq \mathbb{N}$ such that
	\[
	\delta(L)=\delta(Q)>0 \qquad \text{for every infinite } L\subseteq Q.
	\]
	Put
	$
	\delta:=\frac{\delta(Q)}{2}>0.
	$
	Then \cite[Lemma~6]{Ro} yields an infinite subset $M'\subseteq Q$ and a rational number
	$r$ such that for every infinite $L\subseteq M'$ there exists $x\in X$ with
	\[
	\limsup_{n\in L} g_n(x)>\delta+r
	\qquad\text{and}\qquad
	\liminf_{n\in L} g_n(x)<r.
	\]
	For $n\in M'$ define the associated level sets
	\[
	A_n:=\{x\in X: g_n(x)>\delta+r\},
	\qquad
	B_n:=\{x\in X: g_n(x)<r\}.
	\]
	The preceding property implies that for every infinite $L\subseteq M'$ the sequence of
	pairs $(A_n,B_n)_{n\in L}$ is not convergent in Rosenthal's sense. Therefore, by
	\cite[Theorem~2]{Ro}, there exists an infinite subset $M\subseteq M'$ such that
	$(A_n,B_n)_{n\in M}$ is independent. Equivalently, after reindexing, the corresponding
	subsequence of functions is independent in the sense of Definition \ref{d:VariousTameFamil}  (using the
	equivalent open-ray formulation, with $a:=r$ and $b:=r+\delta$).
	
	Suppose, towards a contradiction, that $(g_n)$ has no pointwise convergent subsequence.
	By the preceding argument, after passing to a subsequence we may assume that
	$(g_{n_k})$ is independent. Let $a<b$ be the corresponding witnessing constants.
	For each $k \in \N$ set
	\[
	L_k:=g_{n_k}^{-1}((-\infty,a]),
	\qquad
	R_k:=g_{n_k}^{-1}([b,\infty)).
	\]
	These are disjoint convex sets. Using Fact~\ref{f:facts1}.2, separate $L_k$ and $R_k$
	by a wall $\{U_k,V_k\}$ with
	$
	L_k \subseteq U_k, 
	R_k \subseteq V_k.
	$
	Then $(U_k)$ is an independent sequence of halfspaces, contradicting Boolean-tameness
	(see Remark~\ref{r:B-tame}).
	
	Hence no independent infinite subsequence exists, and Rosenthal's dichotomy implies that
	$\{g_n\}$ has a pointwise convergent subsequence $g_{n_k}\to g$.
	
	Finally, the pointwise limit of median-preserving maps is median-preserving: for all
	$x,y,z\in X$,
	\[
	g_{n_k}(m(x,y,z))=\med(g_{n_k}(x),g_{n_k}(y),g_{n_k}(z)),
	\]
	and by continuity of $\med$ on $[0,1]$ we may pass to the limit $k\to\infty$ to obtain
	\[
	g(m(x,y,z))=\med(g(x),g(y),g(z)).
	\]
	Returning via $\alpha^{-1}$ yields the required subsequence of $\{f_n\}$ and a
	median-preserving limit.
\end{proof}

	Theorem~\ref{t:GenHelly} shows that Helly-type subsequence selection for median-preserving maps
	is governed by the combinatorial tameness of median algebras (absence of large independent hypercube patterns),
	rather than by the presence of a linear order. This extends the classical one-dimensional Helly
	principle to a broad class of finite rank median algebras.

\section{Dynamical tameness of group actions} 
\label{s:dyn}

Denote by $\Aut(X)$ the group of all median automorphisms of  a median algebra $X$. 
By a (topological) \textit{median $G$-algebra} $X$ we mean a (topological) median algebra $X$ equipped with a median preserving (topological) group (continuous) action $\pi \colon G \times X \to X$. In this case we have a natural homomorphism $h_{\pi} \colon G \to \Aut(X)$. If $X$ is a compact median $G$-algebra then $h_{\pi}$ is continuous where $\Aut(X)$ is equipped with the 
compact-open topology. 

\begin{defin} 
	Let $X$ be a topological median $G$-algebra.  
	We say that $X$ is:
	\begin{enumerate}
		\item \textit{Rosenthal representable} if the $G$-space $X$ is Rosenthal representable.  
		\item \textit{Dynamically tame}, if $X$ is compact and the $G$-system $X$ is tame. 
	\end{enumerate}
%
\end{defin}

	A sufficient condition for dynamical tameness in the case of a compact locally convex median algebra $X$ is that the family $\mathcal{MC}(X, [0,1])$ of all continuous MP maps is a tame family (e.g. has finite independence number). 
In fact, it is enough that the orbit $fG$ is a tame family for every $f \in \mathcal{MC}(X, [0,1])$.

\begin{thm}[Finite rank implies dynamical tameness] \label{t:FiniteRankTame}
	Let $X$ be a compact median algebra of finite rank $n$. Then for every continuous median preserving action of a topological group $G$ on $X$ the dynamical system $(G, X)$ is Rosenthal representable (in particular, dynamically tame). Thus, there exists a Rosenthal Banach space $V$, a continuous homomorphism $h \colon G \to \Iso(V)$ and a weak-star $G$-embedding $\a \colon X \to V^*$.  
\end{thm}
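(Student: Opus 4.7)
The plan is to apply the WRN criterion (Fact~\ref{WRNcriterion}): produce a point-separating, bounded, $G$-invariant family $F\subseteq C(X)$ that is tame. The natural candidate is
\[
F:=\mathcal{MC}(X,[0,1]),
\]
the set of all continuous median-preserving maps $X\to[0,1]$. Once $F$ is shown to meet these four requirements, Fact~\ref{WRNcriterion} yields the desired Rosenthal representation (existence of $V$, $h$, and $\alpha$), and then Fact~\ref{f:RosImpliesTame}.1 gives dynamical tameness as a bonus.

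The verification proceeds as follows. Boundedness is immediate ($F\subseteq[0,1]^X$), and $F\subseteq C(X)$ by definition. For $G$-invariance, fix $f\in F$ and $g\in G$; since $G$ acts by median automorphisms and $X$ is a compact median $G$-algebra, each translation $\pi_g\colon X\to X$ is a median-preserving homeomorphism, so $f_g:=f\circ\pi_g$ is again continuous and median-preserving, hence $f_g\in F$. Point-separation is the most substantial step: I would appeal to the local convexity of $X$ (Fact~\ref{f:facts2}.3, valid for compact finite rank algebras) together with compactness of intervals (Fact~\ref{f:facts2}.1). Given $x\neq y$ in $X$, the Kakutani separation property (Fact~\ref{f:facts1}.2) provides a wall $\{A,B\}$ with $x\in A$, $y\in B$; applying the functional separation property $FS_4$ of Van de Vel \cite[Proposition III.4.13.3]{Vel-book} to the disjoint convex sets $\{x\}$ and $\{y\}$ (inside the locally convex compact median algebra $X$ with compact intervals) yields a continuous convexity-preserving function $X\to[0,1]$ separating them, which by Fact~\ref{f:facts1}.1 lies in $\mathcal{MC}(X,[0,1])=F$. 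Equivalently, one may invoke Fact~\ref{f:facts2}.4 embedding $X$ as a median subalgebra of some cube $[0,1]^{\kappa}$, whose coordinate projections already belong to $F$ and separate points.

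The final ingredient is tameness of $F$: by Theorem~\ref{t:RankCharacterization}(2), $\ind(\mathcal{MC}(X,[0,1]))=\rank(X)=n<\infty$, so $F$ contains no independent sequence of length exceeding $n$ and in particular no infinite independent sequence; by Definition~\ref{d:VariousTameFamil}(1), $F$ is tame. All four hypotheses of Fact~\ref{WRNcriterion} are met, so $(G,X)$ is Rosenthal representable: there is a Rosenthal Banach space $V$, a continuous homomorphism $h\colon G\to\Iso(V)$, and a weak-star continuous $G$-map $\alpha\colon X\to B_{V^*}$; since $F$ separates points of $X$, the canonical construction yields $\alpha$ injective (hence a weak-star $G$-embedding, since $X$ is compact), and dynamical tameness follows from Fact~\ref{f:RosImpliesTame}(1). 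The main obstacle I anticipate is the point-separation step, because it is the only place where the topological hypotheses (compactness, local convexity, compact intervals) enter in an essential way; the remaining combinatorial content is essentially already packaged in Theorem~\ref{t:RankCharacterization}.
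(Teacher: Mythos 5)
Your proposal is correct and follows essentially the same route as the paper's proof: apply the WRN criterion (Fact~\ref{WRNcriterion}) to $\mathcal{MC}(X,[0,1])$, with $G$-invariance by composition, point separation via the cube embedding of Fact~\ref{f:facts2}.4 (your alternative $FS_4$ argument is also fine, and the preliminary wall $\{A,B\}$ you extract is not actually needed there), and tameness from Theorem~\ref{t:RankCharacterization}(2).
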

\begin{proof}
	We apply the WRN Criterion (Fact \ref{WRNcriterion}) to the family $\mathcal{F} = \mathcal{MC}(X, [0,1])$.
	
	\noindent \textit{$G$-Invariance:} The composition of a median morphism and an automorphism is a median morphism. Thus $\mathcal{F}$ is invariant.
	 
	\noindent \textit{Point Separation:} By Fact \ref{f:facts2}.3 every compact finite rank median algebra is locally convex. Hence, $X$ is locally convex.  
	  By Fact \ref{f:facts2}.4 
	 every compact locally convex median algebra $X$ embeds (topologically and algebraically) into a Tychonoff cube $[0,1]^\kappa$ (compact median algebra). The coordinate projections of this embedding are continuous median-preserving maps. Thus, the family $\mathcal{F}$ separates points of $X$.
	   
	\noindent \textit{Tameness of $\mathcal{F}$:} By Theorem  \ref{t:RankCharacterization}, the size of any independent sequence in $\mathcal{F}$ is bounded by $\rank(X) = n$. Since $n$ is finite, $\mathcal{F}$ contains no infinite independent sequence.
	
	Therefore, $(G, X)$ is WRN (Rosenthal representable)  by Fact \ref{WRNcriterion}. In particular, the $G$-system is dynamically tame according to Fact \ref{f:RosImpliesTame}.  
\end{proof}

\begin{remark}[Semigroup actions]
	\label{r:SemigroupActions2}
	The finite-rank tameness result itself admits a natural semigroup version.
	Indeed, let \(S\) be a topological semigroup acting continuously on a compact
	finite-rank median algebra \(K\) by median endomorphisms, that is,
	\[
	s\,m(x,y,z)=m(sx,sy,sz)
	\qquad (s\in S,\ x,y,z\in K).
	\]
	Then for every \(f\in MC(K,[0,1])\) and every \(s\in S\), the translated
	function \((fs)(x):=f(sx)\) again belongs to \(MC(K,[0,1])\). Therefore the same argument as in Theorem~\ref{t:FiniteRankTame}
	(cf. Remark~\ref{r:SemAct1}) shows that the compact \(S\)-system \(K\) is dynamically tame. Thus the tameness conclusion of Theorem~\ref{t:FiniteRankTame}
	has a natural semigroup version.  
\end{remark}

As a consequence, for compact finite rank median $G$-spaces the conditions of Fact \ref{f:RosImpliesTame} are satisfied. 
 An additional significant consequence of Theorem~\ref{t:FiniteRankTame} is the structural rigidity it imposes on minimal subsystems. 
  As established by Glasner \cite{Glasner06}, in a tame compact $G$-system, every distal minimal $G$-subsystem is necessarily equicontinuous. By Theorem \ref{t:FiniteRankTame}, this happens in finite rank compact median $G$-spaces $X$.

 \subsection*{Roller compactifications for median $G$-spaces}
 
 In any median algebra $X$ the set $\mathcal{H}(X)$ of all halfspaces separates the points 
 by Fact \ref{f:facts1}.2. Therefore the diagonal map 
 $$
 \iota \colon X \to \{0,1\}^{\mathcal{H}(X)}
 $$
 is an injective MP map. Passing to the closure $X^R:=cl(\iota(X))$ 
 we get the \textit{Roller compactification} $\iota \colon X \to X^R$, equivalently describable as the subspace consisting of ultrafilters on ${\mathcal{H}(X)}$, or using a double dual construction. 
 It has many applications.  
 See \cite{Roller,Fioravanti20,BowditchMedian} for details and alternative definitions.

 \begin{cor} \label{t:G-Rol}
 	Let $\iota \colon X \to X^R \subset \{0,1\}^{\mathcal{H}(X)}$ be the Roller compactification of a  
 	finite rank median algebra $X$. Assume that an abstract discrete group $G$ acts on $X$ by median transformations. Then the induced compact dynamical system $(G,X^R)$ is Rosenthal representable.  
 \end{cor}
 \begin{proof} Since the action is median preserving, 
  for every halfspace $H \in \mathcal{H}(X)$ and every $g \in G$ we have $gH \in \mathcal{H}(X)$. This implies that $G$ acts on the compact median space $X^R$ by continuous median automorphisms such that $\iota$ is a $G$-map. 
By Fact \ref{f:facts2}.5 we have the coincidence  $\rank(X^R)=\rank(X)$. By our assumption $\rank(X)$ is finite. Hence, also $\operatorname{rank}(X^R)$ is finite. Then Theorem \ref{t:FiniteRankTame} guarantees that the dynamical $G$-system $X^R$ is Rosenthal representable. 	 
 \end{proof}
 
\subsection*{Roller-Fioravanti compactifications for topological median $G$-spaces}

Let $(X,m,\t)$ be a topological median algebra. 
For every $x,y \in X$ consider the continuous median retraction $\phi_{x,y} \colon X \to [x,y], \phi(z)=  m(x,y,z)$.  
If all intervals $[x,y]$ in $X$ are $\t$-compact then the following diagonal map 
$$
\nu \colon X \to \prod \{[x,y]: x,y \in X\}, \ \ z \mapsto (m(x,y,z))_{x,y} 
$$ 
leads to the median preserving compactification map  $\nu \colon X \to X^{RF}=cl(\nu(X))$.  
This map is injective, continuous and sometimes is said to be a (generalized) \textit{Roller compactification} of $X$; see \cite{Fio19, Fioravanti20}. Perhaps one may call it \textit{Roller-Fioravanti compactification} (RF, in short). 

The compactification $\nu \colon X \to X^{RF}$, in general, is not necessarily a topological embedding. This happens for example for the metric fan (metrizable separable hedgehog $J(\aleph_0)$ with countably many thorns) which is a rank-1 $\R$-tree. In this case the topology inherited from its RF-compactification is compact and $\nu$ is onto.  

Let $(X, d)$ be a complete median metric space of finite rank which is connected and locally
compact. Then $\nu \colon X \to X^{RF}$ is a topological embedding, \cite{Fioravanti20}. 
  
 Recent results from \cite{Me-MedIntr} show that the topology on $\nu(X)$ is an algebraically defined \textit{intrinsic topology} generated by canonical  retractions on chain-intervals.   

The RF-compactification $X^{RF}$ is metrizable in a large class of examples. Indeed, in
Fioravanti's framework, if $X$ is a complete locally convex median metric space with compact
intervals, then $X^{RF}$ is metrizable whenever $X$ is separable,
\cite[Theorem~4.14(4)]{Fioravanti20}.

\subsection*{Joint continuity of the extended action on RF-compactifications}

Denote by $\Ucal_w$ the induced precompact uniformity on $X$. This is the weak (initial) uniformity induced by the family of all canonical interval retractions. 
	
	If $X$ is a median $G$-algebra then $g(xyz)=g(x)g(y)g(z)$ and $[gx,gy]=g[x,y]$ for every $g,x,y \in G \times X \times X$. Then the $g$-translations $X \to X$ are uniformly continuous with respect to the uniformity $\Ucal_w$. This guarantees that there exists a natural action $\pi_* \colon G \times X^{RF} \to X^{RF}$ with continuous $g$-translations such that $\nu$ is a $G$-map. 
	Moreover, this action preserves the median of $X^{RF}$. 
	This means that $X^{RF}$ is a median  $G_d$-algebra and hence the RF-compactification always is at least a (injective, continuous) $G_{d}$-compactification of $X$.  

Our aim is to show that the extended action $\pi_*$  remains jointly continuous. 

From the standard theory of $G$-compactifications and $G$-completions we know that it is enough (in fact, equivalent) to show that $\Ucal_w$ is an \textit{equiuniformity}. 
This reduction to equiuniformities is well known in the theory of $G$-compactifications. The uniform completion of an equiuniform action is a well-defined equiuniform jointly continuous action. 
See, for example, \cite[Section 4]{Me-FL} or \cite{Me-b}. 

In our settings, when $g$-translations are uniformly continuous, equiuniformity of $\Ucal_w$ with respect to the action of $G$ means that $\Ucal_w$ is  \textit{$G$-bounded}. More precisely, recall that 
the latter means that the following condition holds: 
\sk 
\noindent (\textbf{G}-\textbf{Boundedness}) \ \ For every uniform cover $P \in \Ucal_w$ there exists a neighborhood $V$ of the identity $e \in G$ such that the cover $\{Vx: x \in X\}$ refines $P$.  
\sk  
It is enough to check this condition for every $P \in \Gamma$, where $\Gamma$ is a uniform subbase of $\Ucal_w$. 

\begin{prop} \label{t:G-bound} 
	Let $X$ be a topological median algebra with compact intervals, equipped with a continuous action of a topological group $G$ by median automorphisms. Then the canonical extended action of $G$ on the RF-compactification $X^{RF}$ is jointly continuous.
\end{prop}  
\begin{proof}
The RF-compactification $\nu \colon X \to X^{RF}$ is the uniform completion of $X$ with respect to the weak precompact uniformity $\mathcal{U}_w$ induced by the family of all canonical interval retractions  $\pi_{u,v} \colon X \to [u,v]$ for $u, v \in X$.  
	For any distinct elements $c, d \in X$, consider the branches $B_d^c = \{x \in X \mid m(c,x,d) \neq d\}$ and $B_c^d = \{x \in X \mid m(c,x,d) \neq c\}$. Because $c \neq d$, there is no point $x \in X$ such that $m(c,x,d)$ equals both $c$ and $d$ simultaneously. Thus, $B_d^c \cup B_c^d = X$, and $\mathcal{C}_{c,d} = \{B_d^c, B_c^d\}$ forms a two-element open cover of $X$. 
	
	For any non-singleton compact interval $[u,v] \subseteq X$, and any distinct points $c, d \in [u,v]$, the interval $[c,d]$ is a compact subset of $[u,v]$. The cover $\mathcal{C}_{c,d}$ restricts to a relatively open cover $\{B_d^c \cap [u,v], B_c^d \cap [u,v]\}$ of $[u,v]$ which separates the points $c$ and $d$. Because $[u,v]$ is a compact Hausdorff space, any point-separating family of open covers generates its unique compatible uniformity. Consequently, the initial uniformity $\mathcal{U}_w$ on $X$, being generated by the pullbacks from all such compact intervals, admits the family of two-element covers 
	$$ \mathcal{B} = \big\{\mathcal{B}_{u,v} \mid u, v \in X, u \neq v \big\}, \ \ \  \mathcal{B}_{u,v}:=\{B_v^u, B_u^v\}$$
	as a uniform prebase, where $B_v^u = \{x \in X \mid m(u,x,v) \neq v\}$ and $B_u^v = \{x \in X \mid m(u,x,v) \neq u\}$.

\sk 
\textit{(G-Boundedness)} 
	To prove that $\mathcal{U}_w$ is $G$-bounded, it suffices to prove that each such subbasic cover is $G$-bounded. 
	Fix a subbasic cover $\mathcal{B}_{u,v}$ with $u \neq v$. Assume, for a contradiction, that $B_{u,v}$ is not $G$-bounded. This means that for every neighborhood $V$ of the identity $e \in G$, there exists an element $x \in X$ such that the orbit $V x$ is neither contained in $B_v^u$ nor in $B_u^v$. 
	
	Consequently, there exists a directed set $I$, nets $(g_i)_{i \in I}$ and $(h_i)_{i \in I}$ in $G$ both converging to $e$, and a net $(x_i)_{i \in I}$ in $X$ such that for all $i \in I$:
	$$
		g_i x_i \notin B_v^u \quad \text{and} \quad h_i x_i \notin B_u^v.
	$$
	By the definition of the branches, their complements yield strict equalities:
	$$
		m(u, g_i x_i, v) = v \quad \text{and} \quad m(u, h_i x_i, v) = u.
$$
	Because $G$ acts by median automorphisms, we can apply $g_i^{-1}$ to the first equation and $h_i^{-1}$ to the second equation, yielding:
	\begin{align} 
		m(g_i^{-1}u, x_i, g_i^{-1}v) &= g_i^{-1}v \label{eq:g_inverse} \\
		m(h_i^{-1}u, x_i, h_i^{-1}v) &= h_i^{-1}u \label{eq:h_inverse}.
	\end{align}
	
	Let $\pi_{u,v} \colon X \to [u,v]$ be the canonical gate retraction defined by $\pi_{u,v}(x) = m(u,x,v)$.Then $\pi_{u,v}$ is a median homomorphism. Applying $\pi_{u,v}$ to both sides of equation (\ref{eq:g_inverse}), we obtain:
	$$
		m\big(\pi_{u,v}(g_i^{-1}u), \pi_{u,v}(x_i), \pi_{u,v}(g_i^{-1}v)\big) = \pi_{u,v}(g_i^{-1}v).
	$$
	To simplify notation, let $u_{g,i} = \pi_{u,v}(g_i^{-1}u)$, $v_{g,i} = \pi_{u,v}(g_i^{-1}v)$, and $z_i = \pi_{u,v}(x_i)$. Because $z_i \in [u,v]$ for all $i$, and $[u,v]$ is compact in $X$, there exists a subnet $(z_j)_{j \in J}$ converging to some $z^* \in [u,v]$.
	
	Since the group action is continuous, $g_j^{-1}u \to u$ and $g_j^{-1}v \to v$. Furthermore, because $X$ is a topological median algebra, the retraction $\pi_{u,v}$ is continuous. Since $\pi_{u,v}(u) = u$ and $\pi_{u,v}(v) = v$, we have:
	$$
		u_{g,j} \to u \quad \text{and} \quad v_{g,j} \to v.
	$$
	Taking the limit along the subnet $J$ in the equation $m(u_{g,j}, z_j, v_{g,j}) = v_{g,j}$, the joint continuity of the median operation implies:
	\begin{equation}
		m(u, z^*, v) = v. \label{eq:limit_v}
	\end{equation}
	
	By a symmetric argument, applying $\pi_{u,v}$ to equation (\ref{eq:h_inverse}), defining $u_{h,i} = \pi_{u,v}(h_i^{-1}u)$ and $v_{h,i} = \pi_{u,v}(h_i^{-1}v)$, and passing to the same subnet $J$ where $z_j \to z^*$, we obtain:
	\begin{equation}
		m(u, z^*, v) = u. \label{eq:limit_u}
	\end{equation}
	
 Combining equations (\ref{eq:limit_v}) and (\ref{eq:limit_u}) yields $v = u$. This contradicts the assumption that $u \neq v$. 
	
Thus, the cover $\mathcal{B}_{u,v}$ is $G$-bounded. Since $\Ucal_w$ is generated by such $G$-bounded covers, $\Ucal_w$ itself is $G$-bounded. 
Because the RF-compactification coincides with the uniform completion of $(X, \Ucal_w)$, it follows that the extended action of $G$ on $X^{RF}$ is jointly continuous.
\end{proof}

Note that in \cite{Me-MedIntr} we prove a result which is similar to Proposition \ref{t:G-bound} but for median actions on median algebras equipped with the \textit{intrinsic uniform structure}. These two results, being similar, are however incomparable.  

\begin{cor} \label{t:RF-tame} 
	Let $X$ be a topological median $G$-algebra with finite rank and compact intervals, where $G$ is a topological group. Then the canonically defined RF-compactification $\nu \colon X \to X^{RF}$ is a $G$-compactification which is Rosenthal representable (and dynamically $G$-tame). 
\end{cor}
\begin{proof}
As we already know,   
	$\nu \colon X \hookrightarrow X^{RF}$ is an  injective continuous compactification.  Moreover the following two conditions are satisfied: 
	\begin{enumerate}
		\item $X^{RF}$ is a compact median algebra.
		\item The rank of $X^{RF}$ is finite (and equal to the rank of $X$ by Fact \ref{f:facts2}.5). 
	\end{enumerate}
	
	The joint continuity of the $G$-action on $X^{RF}$  follows from Proposition \ref{t:G-bound}. So, $X^{RF}$ is a well defined compact $G$-space.   
	Since $\operatorname{rank}(X^{RF})$ is finite, Theorem \ref{t:FiniteRankTame} implies that the system $(G,X^{RF})$ is Rosenthal representable. It is also dynamically tame by Fact \ref{f:RosImpliesTame}. 
\end{proof}

\begin{cor} \label{t:RF} 
	Let $(X, d)$ be a complete median metric space of finite rank.  
	Let a topological group $G$ act on $X$ continuously by isometries. Then the Roller--Fioravanti compactification $X^{RF}$ is a Rosenthal representable $G$-system 
	(with continuous action) and dynamically $G$-tame. 
\end{cor}
\begin{proof} Every complete median metric space of finite rank has compact intervals, \cite[Corollary 2.20]{Fioravanti20}. Therefore, the RF-compactification $\nu \colon X \hookrightarrow X^{RF}$ is well defined.  
	Since the action of $G$ on $(X,d)$ is isometric it is also median preserving. Hence, $X$ is a median $G$-space.  
	The rest of the proof directly follows now from Corollary \ref{t:RF-tame}.  
\end{proof}




The finite-rank assumption together with compactness of intervals make several natural classes of median \(G\)-spaces directly amenable to Theorem~\ref{t:FiniteRankTame}, Corollary~\ref{t:G-Rol}, Proposition~\ref{t:G-bound}, and Corollaries~\ref{t:RF-tame}--\ref{t:RF}. So, many RF-compactifications (hence, also the corresponding RF-\textit{boundaries} $X^{RF} \setminus X$) are tame $G$-spaces.

By \cite[Proposition 4.21]{Fioravanti20}, under the hypotheses of Corollary~\ref{t:RF} the RF-compactification coincides with the horofunction (Busemann) compactification of $(X,d)$. Since in this case the horofunction compactification 
is tame, Corollary \ref{t:RF} gives a partial answer to a question posed in \cite[Question 6.7]{Me-FL}.
This applies in particular in the case of finite dimensional CAT(0) cube complexes, which is a major source of median spaces in geometric group theory.

\subsection*{More remarks} 

\begin{remark}\label{r:Generalization}
	Corollary~\ref{t:G-Rol} remains true for every subinfinite-rank median algebra $X$.
	Indeed, the proof uses only that the family of halfspaces of $X$ is Boolean-tame, which
	is equivalent to subinfinite rank by Remark~\ref{r:B-tame}. Let
	$
	\iota \colon X \to \overline X \subseteq \{0,1\}^{H(X)}
	$
	be the Roller compactification, and for each $H\in H(X)$ let
	$
	\widehat\chi_H \colon \overline X \to \{0,1\}
	$ 
	be the corresponding coordinate map. Then $\widehat\chi_H$ is continuous and
	$\widehat\chi_H|_X=\chi_H$. If $(\widehat\chi_{H_n})$ were an independent sequence on
	$\overline X$, then every finite Boolean combination of the clopen sets
	$\widehat\chi_{H_n}^{-1}(1)$ would be a nonempty open subset of $\overline X$, hence
	would meet the dense subset $\iota(X)$. It follows that $(\chi_{H_n})$ would be an
	independent sequence on $X$, contradicting Boolean-tameness. Thus the family
	$\{\widehat\chi_H : H\in H(X)\}$ is tame on $\overline X$. Since it is $G$-invariant and
	separates points of $\overline X$, Fact \ref{WRNcriterion}  implies that the compact
	$G_d$-system $\overline X$ is Rosenthal representable. 
\end{remark}

To appreciate the role of finite rank (or finite Rosenthal dimension), consider the Cantor cube $K = \{0,1\}^\N$ with the product topology and the coordinate-wise median structure.
The group $G = \Aut(K)$ is very large; it contains the group of all permutations of coordinates $S_\infty$ and, if indexed by $\Z$, the shift automorphism (which generates the Bernoulli shift). 

\begin{prop}
	The Cantor cube $K = \{0,1\}^\N$ is a compact median $G$-algebra which is \textbf{not} dynamically tame. Also it is not a subinfinite-rank median algebra.
\end{prop}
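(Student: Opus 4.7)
The plan is to exhibit explicit witnesses for both failures using the coordinate halfspaces and coordinate projections of the Cantor cube; no delicate argument is needed beyond what is already set up in Theorem~\ref{t:CrossedWallsAreIndependent} and the definition of dynamical tameness.

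First I would verify the standing hypotheses: $K=\{0,1\}^{\N}$ with the coordinate-wise median and product topology is a compact topological median algebra (a product of countably many rank-$1$ linearly ordered median algebras, each coordinate median being coordinate-wise continuous). The group $G=\operatorname{Aut}(K)$ contains both the symmetric group $S_\infty$ acting by coordinate permutations and (after reindexing by $\Z$) the Bernoulli shift; both act continuously by median automorphisms, so $(G,K)$ is a legitimate compact median $G$-algebra.

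For the failure of subinfinite rank, consider the coordinate halfspaces
\[
H_n:=\{x\in K:\ x_n=0\}\qquad (n\in\N).
\]
Each $H_n$ is a halfspace, since $\pi_n\colon K\to\{0,1\}$ is median-preserving and both $H_n$ and $H_n^{c}=\pi_n^{-1}(1)$ are convex. For every finite disjoint $P,M\subseteq\N$ the element $x\in K$ defined by $x_i=0$ for $i\in P$, $x_j=1$ for $j\in M$ and $x_k=0$ otherwise shows
\[
\bigcap_{i\in P}H_i\ \cap\ \bigcap_{j\in M}H_j^{c}\neq\varnothing,
\]
so the family $\{H_n\}_{n\in\N}$ is Rosenthal-independent. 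By Theorem~\ref{t:CrossedWallsAreIndependent}(1) this family is also pairwise crossing, hence $K$ admits an infinite pairwise-crossing family of halfspaces; thus $K$ is not of subinfinite rank (in particular $\operatorname{rank}(K)=\infty$), and by Remark~\ref{r:B-tame} it is not even Boolean-tame.

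For the failure of dynamical tameness, set $f:=\pi_0\in C(K,[0,1])$; this is continuous and median-preserving. Choosing $g_n\in S_\infty\subseteq G$ with $g_n(0)=n$ gives $f_{g_n}=\pi_n$, so the orbit $fG$ contains the entire sequence $\{\pi_n\}_{n\in\N}$. With $a,b$ any pair $0<a<b<1$, the same witness $x$ as above shows
\[
\bigcap_{i\in P}\pi_i^{-1}[0,a]\ \cap\ \bigcap_{j\in M}\pi_j^{-1}[b,1]\neq\varnothing
\]
for every disjoint $P,M\subseteq\N$. Therefore $fG$ contains an infinite Rosenthal-independent sequence, so $fG$ is not a tame family, and by the definition of dynamical tameness $(G,K)$ is not dynamically tame. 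The only point requiring minimal care is to topologize the acting group so that the action is continuous, which is trivial for $S_\infty$ (discrete or permutation topology) and for the shift; there is no substantive obstacle.
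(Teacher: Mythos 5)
Your proof is correct and follows essentially the same route as the paper: the coordinate projections $\pi_n$ form an infinite independent sequence contained in the orbit of a single projection under coordinate permutations, which kills tameness. You are in fact slightly more complete than the paper, since you also spell out the subinfinite-rank failure explicitly via the coordinate halfspaces and Theorem~\ref{t:CrossedWallsAreIndependent}(1), a step the paper leaves implicit.
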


\begin{proof}
	Consider the coordinate projections $\pi_n \colon K \to \{0,1\} \subset \R$, defined by $\pi_n(x) = x_n$.
	These maps are continuous and median-preserving. The sequence $\{\pi_n\}_{n=1}^\infty$ is an \textbf{independent sequence}. 
	To see this, let $P, M$ be any two disjoint finite subsets of $\N$. We must find a point $x \in K$ such that:
	\[ \pi_n(x) = 0 \text{ for } n \in P \quad \text{and} \quad \pi_m(x) = 1 \text{ for } m \in M. \]
	Since the coordinates in a product space can be chosen arbitrarily, such a point $x$ clearly exists (set $x_k = 0$ if $k \in P$, $x_k = 1$ if $k \in M$, and arbitrarily otherwise).   
	
	Now, consider the orbit of the first projection $\pi_1$ under the action of $G$. Since $G$ acts transitively on the coordinates (via permutations), the orbit of $\pi_1$ contains the entire set $\{\pi_n\}_{n=1}^\infty$. Since this orbit contains an independent sequence, the function $\pi_1$ is not tame. Consequently, the system $(G, K)$ is not tame. It is true even for the subgroup of $G$ indexed by $\Z$ (where $K$ becomes the Bernoulli shift). 
\end{proof}


\begin{remark} 
Every compact (Hausdorff) space $K$ embeds into a compact median algebra $Y$ which is locally convex. Indeed, take for example, $Y:=[0,1]^{\kappa}$. 
 However, it is not true in general if we require finiteness of the rank for $Y$. 
 Indeed, such embedding is impossible for infinite-dimensional compact space $K$ because   \cite[Lemma 12.3.3]{BowditchMedian} asserts that for a compact median algebra $Y$ we have $\dim(Y) \leq \rank(Y)$. 
 
 For nonmetrizable $K$ even zero-dimensionality is not a sufficient condition.  
 Indeed, let $K$ be a compact space which is not WRN. Then by Theorem \ref{t:FiniteRankTame} $K$ cannot be embedded into a finite rank compact median algebra. This happens, for example, for the $0$-dimensional space $K:=\beta \N$ the Stone-\v{C}ech compactification of $\N$; see an argument of Todorcevic presented in \cite{GM-TC} which shows that $\beta \N$ is not WRN.  
\end{remark}

It would be interesting to understand which additional dynamical or structural restrictions arise in the presence of finite rank.

\begin{question} \label{q:GrRos} 
	Which topological groups $G$ can be embedded into the automorphism group $\Aut(X)$ (compact-open topology) for some finite-rank compact median space $X$?
\end{question} 

An additional motivation for Question \ref{q:GrRos} is Theorem \ref{t:FiniteRankTame}. Recall that it remains an open question whether every topological (say, Polish) group is Rosenthal representable.

The following definition from \cite{GM-TC} is justified by Todor\u{c}evi\'{c}'s Trichotomy and the dynamical version of the Bourgain-Fremlin-Talagrand dichotomy.  

\begin{defin} \label{d:TameClasses} 
	A compact metrizable dynamical $G$-system is said to be:
	\begin{enumerate}
		\item Tame$_\mathbf{1}$ if $E(G,X)$ is first countable.
		\item Tame$_\mathbf{2}$ if $E(G,X)$ is hereditarily separable. 
	\end{enumerate}	
	By results of \cite{GM-TC} we know that 
	$
	\mathrm{Tame}_\mathbf{2} \subset \mathrm{Tame}_\mathbf{1} \subset \mathrm{Tame}. 
	$ 
\end{defin}

Since every compact finite rank median $G$-space is tame, in view of the hierarchy of tame dynamical systems established in \cite{GM-TC}, we propose the following general problem. 

\begin{question} \label{q:FineClassification} 
	Which natural finite rank compact metrizable median $G$-algebras 
	$K$ are Tame$_1$ ?  Tame$_2$ ? 
\end{question}

One may show that finite-dimensional cubes $K:=[0,1]^n$ (as compact median spaces) are Tame$_\textbf{1}$ with respect to the action of the Polish group $G:=\Aut(K)$ of all homeomorphic median automorphisms. 
In contrast, note that by a result of Codenotti \cite{Codenotti}, for the  \textit{Wazewski dendrite} $W$ (which is a typical example of rank 1 compact median algebra) the corresponding $G$-system $W$ with $G=\Aut(W)$ is not Tame$_1$ (although it is tame by Theorem \ref{t:FiniteRankTame}).  

\begin{ex} \label{r:CUBES} Here we give some examples: 
\begin{enumerate}
	\item \cite{GM-TC} 
Consider the linearly ordered $H_+([0,1])$-system $[0,1]$, where $H_+[0,1]$ is the group of all order preserving homeomorphisms of $[0,1]$. The enveloping semigroup of this order preserving system is a (compact) subspace of the Helly space, which is first countable. So, this system is Tame$_\mathbf{1}$. It is not Tame$_{\bf 2}$. 
	In fact, it is (like the Helly space) not hereditarily separable. 
	
	\item Consider $K:=[0,1]$ as a linear compact median algebra and $G=\Aut(K)=\Homeo[0,1]$ the topological group of all monotone homeomorphisms. Then this dynamical $G$-system is Tame$_1$.  
	Indeed, the enveloping semigroup is a subspace of the union of two compact first countable spaces each of them is homeomorphic to the  
	classical Helly space $C_+([0,1],[0,1])$ of all nondecreasing self-maps maps $[0,1] \to [0,1]$ in the pointwise topology, which is first countable topological space. 

\item 
Let $K=[0,1]^n$ and $G=\Aut(K)$. Put $H:=\Aut([0,1])$. By \cite[Example 3.8]{BowditchMedian}, every
$g\in G$ can be written uniquely in the form
\[
g(x_1,\dots,x_n)=\bigl(h_1(x_{\sigma(1)}),\dots,h_n(x_{\sigma(n)})\bigr)
\]
for some permutation $\sigma\in S_n$ and some $h_1,\dots,h_n\in H$. Hence$G$ is the semidirect product
$
G\cong H^n\rtimes S_n.
$  
For every $\sigma\in S_n$, define
\[
\Phi_\sigma \colon (E(H,[0,1]))^n\to K^K,
\ \ 
\Phi_\sigma(f_1,\dots,f_n)(x_1,\dots,x_n)
:=
\bigl(f_1(x_{\sigma(1)}),\dots,f_n(x_{\sigma(n)})\bigr).
\]
Then
\[
E(G,K)=\bigcup_{\sigma\in S_n}\Phi_\sigma\bigl((E(H,[0,1]))^n\bigr).
\]
By (2), $E(H,[0,1])$ is first countable. Therefore $(E(H,[0,1]))^n$ is first countable, and each
set $\Phi_\sigma\bigl((E(H,[0,1]))^n\bigr)$ is a compact first countable subspace of $K^K$.
Since $S_n$ is finite, $E(G,K)$ is a finite union of closed first countable subspaces, hence is
itself first countable. Thus the dynamical system $(G,K)$ is Tame$_\textbf{1}$.   
\end{enumerate}	
\end{ex}


\section{Finite-rank median cascades satisfy Sarnak's M\"obius Disjointness conjecture} 
\label{s:Sarnak} 

Let \(K\) be a compact metrizable space and let \(T\colon K\to K\) be a
continuous selfmap. We say that the system \((K,T)\) satisfies
\textit{Sarnak's M\"obius Disjointness Conjecture} (SMDC) if for every
\(\varphi\in C(K)\) and every \(x\in K\),
\[
\lim_{N\to\infty}
{1\over N}\sum_{n=1}^{N}\mu(n)\varphi(T^n x)=0,
\]
where \(\mu\colon \mathbb N\to \{-1,0,1\}\) is the M\"obius function:
\(\mu(1)=1\), \(\mu(n)=(-1)^t\) if \(n\) is a product of \(t\) distinct
primes, and \(\mu(n)=0\) otherwise.

Sarnak's conjecture predicts this property for every compact metrizable
system of zero topological entropy.  
This problem was extensively studied in many  publications. See, for example, \cite{Sarnak,HWY,AD,LOZ} and also a survey \cite{FKL} (and references therein).  
Here we present only results which are strongly related to the setting of the present work. Especially, concerning tame $\Z$-cascades. 

\begin{f} \label{f:0entropy} (Kerr--Li \cite{KL})   
	Every tame compact metrizable \(\mathbb Z\)-cascade has zero topological
entropy. 
\end{f}

\begin{proof} Directly follows by results of \cite{KL}.  We present here a sketch for the convenience.  
	\begin{enumerate}
		\item By \cite[Proposition~3.9(2)]{KL}, positive topological entropy 
		 is equivalent to the existence of a non-diagonal IE-pair.
		\item By Definition~3.1 and Definition~6.1 (in \cite{KL}), an IE-pair is defined by having independence sets of positive density, whereas an IT-pair is defined by having an infinite independence set. Since any set of positive density in $\mathbb{Z}$ 
		is necessarily infinite (see \cite[Section~2]{KL}), every IE-pair is automatically an IT-pair.
		\item By \cite[Proposition~6.4(2)]{KL}, the existence of a non-diagonal IT-pair is equivalent to the system being untame.
	\end{enumerate}
	Thus, positive entropy implies the existence of a non-diagonal IT-pair (i.e., untameness). 
\end{proof}


The following theorem emphasizes the relevance of tame cascades as a useful sufficient condition. 

\begin{f} \label{f:HWY} 
	(Huang--Wang--Ye \cite[Corollary 1.5]{HWY})
	Every tame compact metrizable invertible cascade satisfies SMDC.
\end{f}

It is well known that weakly almost periodic systems are tame. 
More generally, 
every hereditarily nonsensitive (HNS) system is tame; in particular, every RN (Asplund representable) 
system is tame. 
 This already gives many suitable examples. 
 
 We note some tame but not HNS examples. By a result of Karagulyan
 \cite{Kar}, every orientation-preserving homeomorphism of the circle satisfies
 SMDC. Using Fact~\ref{f:HWY}, this can be generalized in several directions.
 Indeed, for every circularly ordered compact metric space \(X\) and every
 circular order-preserving homeomorphism \(T \colon X\to X\), the cascade
 \((X,T)\) is tame \cite{GM-c}; hence, by Fact~\ref{f:HWY}, it satisfies SMDC.
 This includes, for example, Sturmian cascades, such as the subshift $X \subset \{0,1\}^{\Z}$ generated
 by the Fibonacci substitution.
 
Moreover, by \cite[Theorem 2.3]{GM-D} every continuous action of a group $G$  on a \textit{regular} continuum is tame. A topological space $X$  is called regular if every point has a local base for its topology, each member
of which has a finite boundary. Every (local)
dendrite is regular. Furthermore, every \textit{rearrangement limit space} in the sense of J. Belk and B. Forrest \cite{BF19} is regular (see also \cite{DucTar26}).  

\begin{cor} \label{c:SMDC} 
	Let $K$ be a compact metrizable median algebra of finite rank and let
	$T \colon K\to K$ be a homeomorphic median automorphism of $K$. Then the cascade
	$(K,T)$ satisfies SMDC. 
\end{cor}
\begin{proof}
	The homeomorphism \(T\) defines a continuous action of the discrete group
	\(\mathbb Z\) on \(K\) by median automorphisms. By Theorem~\ref{t:FiniteRankTame},
	this compact metrizable cascade is dynamically tame. 
	By Fact~\ref{f:HWY}, every tame compact metrizable invertible cascade satisfies 
	SMDC. 
\end{proof}

\begin{ex}[Minimal median compactifications]
	\label{ex:MMC-Mobius}
	Using the intrinsic median compactification from \cite{Me-MedIntr}, one obtains
	the following additional source of M\"obius-disjoint systems. 
\begin{enumerate}
\item 	Let \(X\) be a
finite-rank median algebra such that \((X,\tau_m)\) is separable and metrizable,
and let \(\widehat X\) be its Minimal Median Compactification (MMC) in the sense of \cite{Me-MedIntr}. Then
\(\widehat X\) is a compact metrizable finite-rank median algebra.  
Every median automorphism \(T\in\Aut(X,m)\)
extends to a homeomorphic median automorphism \(\widehat T\) of \(\widehat X\). Hence the cascade
\((\widehat X,\widehat T)\) satisfies SMDC by Theorem~\ref{t:FiniteRankTame} and Fact \ref{f:HWY}.
\item Let \(X\) be a countable finite-rank median algebra with finite intervals.
By \cite{Me-MedIntr}, the MMC of \(X\) coincides with the Roller
compactification \(X^R\), and \(X^R\) is metrizable. 
Therefore every median automorphism \(T\in\Aut(X)\) induces a
M\"obius-disjoint cascade \((X^R,T^R)\), by Corollary~\ref{c:SMDC}.
	\end{enumerate}  
\end{ex}

\begin{ex}[Cubulated groups]
	\label{ex:cubulated-groups-Mobius}
	Let \(X\) be the vertex median algebra of a countable finite dimensional
	locally finite CAT(0) cube complex. Then \(X\) has finite rank and finite
	intervals. Hence, by \cite{Me-MedIntr}, its Roller compactification \(X^R\)
	is a compact metrizable finite-rank median algebra. If a group \(G\) acts on
	\(X\) by cubical automorphisms, then for every \(g\in G\) the induced cascade
	\((X^R,g)\) satisfies SMDC.
	
	This applies, in particular, to right-angled Artin groups associated with
	finite graphs: such a group acts freely and cocompactly by cubical 
	automorphisms on the universal cover of its Salvetti complex; see, for example,
	\cite{CharneyRAAG}. It also applies to right-angled Coxeter groups acting on
	finite-dimensional Davis cube complexes, to fundamental groups of compact
	nonpositively curved cube complexes, to products of countable locally finite
	trees, and more generally to finite-dimensional Sageev cubulations whose
	vertex median algebras are countable and locally finite. 
	In the RAAG case, Corollary~\ref{t:RF} also gives tameness of the induced
	action on the RF-compactification; whenever this compactification is
	metrizable, individual elements give SMDC cascades by Corollary~\ref{c:SMDC}.
\end{ex}


   \bibliographystyle{amsplain}

\end{document}